\newtheorem{theorem}{Theorem}[section]
\newtheorem{proposition}[theorem]{Proposition}
\newtheorem{lemma}[theorem]{Lemma}
\newtheorem{corollary}[theorem]{Corollary}
\theoremstyle{definition}
\newtheorem{definition}[theorem]{Definition}
\newtheorem{remark}[theorem]{Remark}
\newtheorem{question}[theorem]{Question}
\theoremstyle{theorem}
\title[Contact 5-manifolds and smooth structures on Stein 4-manifolds]{Contact 5-manifolds and smooth structures\\on Stein 4-manifolds}
\author[Kouichi Yasui]{Kouichi Yasui}
\address{Department~of~Mathematics, Graduate School~of~Science, Hiroshima~University, 1-3-1 Kagamiyama, Higashi-Hiroshima, 739-8526, Japan}
\email{kyasui@hiroshima-u.ac.jp}
\thanks{The author was partially supported by JSPS KAKENHI Grant Number 16K17593.}
\date{April 12, 2016}
\keywords{4-manifold; Stein structure; contact structure; open book; minimal genus function}
\subjclass[2010]{Primary~57R55, Secondary~57R65, 57R17}
\begin{document}
\maketitle

\begin{abstract}We show that, under a certain condition, contact 5-manifolds can `coarsely' distinguish smooth structures on compact Stein 4-manifolds via contact open books. We also give a simple sufficient condition for an infinite family of Stein 4-manifolds to have an infinite subfamily of pairwise non-diffeomorphic Stein 4-manifolds. The proofs rely on the adjunction inequality. We remark that there are various examples of infinitely many pairwise exotic Stein 4-manifolds whose smooth structures can be distinguished by these results.
\end{abstract}

\section{Introduction}\label{sec:intro}
We consider abstract open books on closed oriented smooth 5-manifolds whose pages are compact Stein 4-manifolds and whose monodromies are the identity maps, which are a symplectomorphism common to all Stein 4-manifolds. It is known that the Stein structure on the page of an open book induces a contact structure on the smooth 5-manifold (\cite{Gi}, cf.\ \cite{DGV}), and an open book equipped with this contact structure is called a \textit{contact open book}. Van Koert \cite{vK} and Ding-Geiges-van Koert \cite{DGV} studied such contact 5-manifolds and then classified contact 5-manifolds admitting subcritical Stein fillings without 1-handles in \cite{DGV}. 

From the viewpoint of 4-dimensional topology, it is natural to ask how the smooth structures on the pages of contact open books and the supporting contact structures are related to each other. 
Ozbagci-van Koert~\cite{OV} recently showed that infinitely many contact open books with pairwise exotic (i.e.\ homeomorphic but non-diffeomorphic) pages (obtained in \cite{AY6}) can support pairwise non-contactomorphic contact structures on fixed smooth 5-manifolds. Akbulut and the author~\cite{AY7} extended this result to infinitely many smooth 5-manifolds, using exotic 4-manifolds obtained in \cite{Y6}. By contrast, Akbulut and the author~\cite{AY7} also proved that infinitely many contact open books with pairwise exotic pages can support the same contact 5-manifold, using exotic 4-manifolds obtained in \cite{AY6} and \cite{Y6}. 

In this paper we show that, under a certain condition, closed contact 5-manifolds can `coarsely' distinguish smooth structures on pages of their supporting contact open books. We note that such contact 5-manifolds are not invariants of smooth structures on pages (\cite{DGV}. cf.\ Proposition~\ref{prop:strengthen}). For the definition of subcritical Stein fillings without 1-handles, see Section~\ref{sec:Stein}. 

\begin{theorem}\label{intro:thm:main}
Let $\{X_i\mid i\in \mathbb{N}\}$ be an infinite family of compact Stein 4-manifolds such that, for each $i$, the contact 5-manifold $(W_i,\xi_i)$ supported by the contact open book $(X_i, \textnormal{id})$ admits a subcritical Stein filling without 1-handles. If the members of $\{(W_i,\xi_i)\mid i\in \mathbb{N}\}$ are pairwise non-contactomorphic, then at least infinitely many members of $\{X_i\mid i\in \mathbb{N}\}$ are pairwise non-diffeomorphic.
\end{theorem}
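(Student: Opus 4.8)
The plan is to prove the contrapositive, reducing everything to a single lemma: that \emph{diffeomorphic pages force contactomorphic contact open books} in this setting. Concretely, I would first record the standard fact that if $X$ is a compact Stein $4$-manifold then $X\times D^2$ is a subcritical Stein $6$-manifold whose contact boundary $\partial(X\times D^2)=(\partial X\times D^2)\cup(X\times S^1)$ is precisely the contact manifold supported by the open book $(X,\mathrm{id})$. Since $X$ carries no $1$-handles, neither does $X\times D^2$ (a $0$-handle and $2$-handles of $X$ yield a $0$-handle and $2$-handles of $X\times D^2$, all of index $\le 2<3$), so $X_i\times D^2$ is a subcritical Stein filling of $(W_i,\xi_i)$ without $1$-handles, exactly of the type appearing in the hypothesis.

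The key step is the lemma: if $X_i$ and $X_j$ are diffeomorphic, then $(W_i,\xi_i)$ and $(W_j,\xi_j)$ are contactomorphic. A smooth diffeomorphism $X_i\to X_j$ immediately gives a diffeomorphism $X_i\times D^2\to X_j\times D^2$ of the two fillings. The point is then that, for the special class of contact $5$-manifolds admitting a subcritical Stein filling without $1$-handles, the classification of Ding--Geiges--van Koert \cite{DGV} shows that the contactomorphism type is governed by the diffeomorphism type of such a filling; hence diffeomorphic fillings have contactomorphic boundaries, and $(W_i,\xi_i)\cong(W_j,\xi_j)$. I expect this lemma to be the main obstacle, precisely because a priori the two Stein structures on the common smooth $4$-manifold could differ and need not be carried into one another by the diffeomorphism. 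What saves the argument is the subcritical flexibility underlying the \cite{DGV} classification, which renders the contact boundary insensitive to the Stein (as opposed to smooth) structure of the filling; care is needed to verify that the formal almost-contact data is likewise pinned down by the smooth type, using that these fillings are simply connected, so that no extra invariant beyond the diffeomorphism type of $X_i\times D^2$ survives.

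Granting the lemma, the theorem follows by a pigeonhole argument. Suppose, for contradiction, that only finitely many diffeomorphism types occur among $\{X_i\mid i\in\mathbb{N}\}$. Since the index set is infinite, some diffeomorphism type is realized by at least two indices $i\neq j$, so $X_i$ and $X_j$ are diffeomorphic; by the lemma $(W_i,\xi_i)$ and $(W_j,\xi_j)$ are contactomorphic, contradicting the assumption that the members of $\{(W_i,\xi_i)\}$ are pairwise non-contactomorphic. Therefore infinitely many distinct diffeomorphism types occur among the $X_i$, and selecting one representative from each type yields an infinite subfamily that is pairwise non-diffeomorphic.

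Finally, I would note that this implication is essentially formal once the lemma is in place. The adjunction inequality advertised in the abstract does not enter this step; rather, it is the tool for verifying the standing hypothesis that the $(W_i,\xi_i)$ are pairwise non-contactomorphic (equivalently, through the lemma, that the fillings $X_i\times D^2$ are pairwise non-diffeomorphic) for concrete families, and for the companion sufficient condition producing infinitely many non-diffeomorphic Stein $4$-manifolds.
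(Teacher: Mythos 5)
Your key lemma --- that diffeomorphic pages force contactomorphic contact open books --- is false, and it fails for precisely the reason this theorem is stated with ``at least infinitely many'' rather than ``all.'' The contact structure $\xi_i$ induced on $W_i$ by the open book $(X_i,\textnormal{id})$ depends on the \emph{Stein} structure of the page, not merely on its diffeomorphism type: by Lemma~\ref{lem:iso} there is an isomorphism $H^2(X_i;\mathbb{Z})\to H^2(W_i;\mathbb{Z})$ carrying $c_1(X_i)$ to $c_1(\xi_i)$, and by Theorem~\ref{thm:DGV} it is the pair $\bigl(H^2(W_i;\mathbb{Z}),c_1(\xi_i)\bigr)$ that governs the contactomorphism type. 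A fixed smooth 4-manifold can carry Stein structures with essentially different first Chern classes, and a diffeomorphism $X_i\to X_j$ need not relate $c_1(X_i)$ to $c_1(X_j)$. Concretely, Proposition~\ref{prop:strengthen} of the paper produces pairwise \emph{diffeomorphic} Stein handlebodies $Z_1,\dots,Z_n$ (built by attaching a 2-handle along Legendrian representatives of a single smooth knot with equal Thurston--Bennequin numbers but different rotation numbers) whose open books $(Z_i,\textnormal{id})$ support pairwise \emph{non-contactomorphic} contact 5-manifolds. This is a direct counterexample to your lemma; note also that your argument, if valid, would show that \emph{all} members of the family are pairwise non-diffeomorphic, a strengthening the paper explicitly rules out --- a sign that the argument proves too much. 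The appeal to ``subcritical flexibility'' does not rescue it: the flexibility underlying \cite{DGV} reduces the classification to the data $(H^2,c_1)$, but it does not erase the dependence on $c_1$.

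The paper's actual route runs in the opposite direction and cannot avoid gauge theory. From pairwise non-contactomorphic $(W_i,\xi_i)$, Theorem~\ref{thm:DGV} and Lemma~\ref{lem:iso} give that no isomorphism $H^2(X_i;\mathbb{Z})\to H^2(X_j;\mathbb{Z})$ carries $c_1(X_i)$ to $c_1(X_j)$; after reducing to the case where the $X_i$ are simply connected with isomorphic torsion-free $H^2$ (simple connectivity is extracted from the hypothesis of a subcritical filling without 1-handles via Remark~\ref{rem:subcritical}), this forces the divisibilities of the $c_1(X_i)$ to be pairwise distinct. Theorem~\ref{thm:Stein:exotic} --- proved with the adjunction inequality via the intersection genus, or alternatively via the finiteness of first Chern classes of Stein structures on a fixed smooth 4-manifold (Proposition~\ref{prop:finiteness:exotic}) --- then converts unbounded divisibility into infinitely many diffeomorphism types. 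So, contrary to your closing paragraph, the adjunction inequality is not merely a tool for verifying the hypothesis in examples: it supplies exactly the step your lemma was intended to (but cannot) provide.
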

\begin{remark}$(1)$ If the page of a contact open book (with the identity monodromy) is a 4-dimensional Stein handlebody without 1-handles, then the contact 5-manifold supported by the contact open book admits a subcritical Stein filling without 1-handles (see Remark~\ref{rem:subcritical}). For more general sufficient condition, see Lemma~\ref{lem:AC}.
\\
$(2)$ There are various examples of infinitely many pairwise exotic 4-manifolds (obtained in \cite{Y6}) which are distinguished by this theorem (see Theorem~\ref{thm:exotic}).
\\
$(3)$ The condition `at least infinitely many members' in this theorem cannot be strengthened to `all members' without additional constraints (see Proposition~\ref{prop:strengthen}).
\\
$(4)$ The converse of this theorem does not hold. In fact, infinitely many pairwise exotic Stein handlebodies without 1-handles can be the pages of the same contact 5-manifold (Theorem~1.2 in \cite{AY7}). 
\end{remark}

The proof of this theorem relies on the adjunction inequality --- a result from Seiberg-Witten theory --- and the classification of contact 5-manifolds admitting subcritical Stein fillings without 1-handles in \cite{DGV}.

We moreover give a simple and practical sufficient condition for an infinite family of Stein 4-manifolds to have an infinite subfamily of pairwise non-diffeomorphic Stein 4-manifolds. For our definition of the divisibility, see Subsection~\ref{subsec:distinguish}. 

\begin{theorem}\label{thm:Stein:exotic}For a given infinite family of compact Stein 4-manifolds, if the divisibilities of the first Chern classes of the members are pairwise distinct, then at least infinitely many members are pairwise non-diffeomorphic. 
\end{theorem}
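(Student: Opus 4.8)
The plan is to show that the divisibility of the first Chern class of a Stein structure, although not itself a diffeomorphism invariant, is nonetheless controlled entirely by the smooth topology of the underlying 4-manifold; the hypothesis of pairwise distinct divisibilities then forces infinitely many distinct smooth types. Concretely, I would first establish a uniform bound of the following shape. Fix a compact smooth 4-manifold $X$, and let $G\colon H_2(X;\mathbb{Z})\to\mathbb{Z}_{\ge 0}$ be the minimal genus function and $Q$ the intersection form; both are diffeomorphism invariants. For any Stein structure $J$ on $X$ with first Chern class $c_1(X,J)\in H^2(X;\mathbb{Z})$, the adjunction inequality for compact Stein surfaces gives, for every closed connected oriented surface $\Sigma\subset X$ of genus $\ge 1$, the estimate $[\Sigma]\cdot[\Sigma]+|\langle c_1(X,J),[\Sigma]\rangle|\le 2g(\Sigma)-2$.

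I would then upgrade this to a bound valid for an arbitrary class $a\in H_2(X;\mathbb{Z})$, including sphere classes, by a stabilization argument: taking a minimal-genus representative of $a$ and connect-summing with a trivially embedded torus in a ball raises its genus to $\max(G(a),1)$ without changing the homology class or the self-intersection, so that $|\langle c_1(X,J),a\rangle|\le 2\max(G(a),1)-2-Q(a,a)=:B(a)$, a quantity depending only on the diffeomorphism type of $X$ and not on $J$. Choosing a basis $a_1,\dots,a_n$ of $H_2(X;\mathbb{Z})/\mathrm{torsion}$ and recalling that the divisibility of a class is computed as the greatest common divisor $\gcd_j\langle c_1(X,J),a_j\rangle$ of its pairings against a basis, I conclude that whenever $c_1(X,J)$ is non-torsion its divisibility divides some nonzero $\langle c_1(X,J),a_j\rangle$ and is therefore at most $\max_j B(a_j)=:D(X)$, with the torsion case being trivial. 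Thus the divisibility of the first Chern class of \emph{every} Stein structure on $X$ is at most $D(X)$, a number determined by the smooth structure alone.

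With this bound in hand I would argue by contradiction. Suppose the family $\{X_i\mid i\in\mathbb{N}\}$ realized only finitely many diffeomorphism types. Since the family is infinite, the pigeonhole principle produces an infinite subfamily all of whose members are diffeomorphic to a single compact 4-manifold $X$. Transporting each Stein structure to $X$ along a diffeomorphism yields a Stein structure on $X$ whose first Chern class has the same divisibility as the original, because a diffeomorphism preserves the evaluation pairing between cohomology and homology. By the uniform bound, all these divisibilities are at most $D(X)$, so they take at most $D(X)+1$ distinct values. This contradicts the hypothesis that the divisibilities of the members of $\{X_i\}$ are pairwise distinct, since an infinite subfamily would then have to realize infinitely many distinct nonnegative integers. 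Hence $\{X_i\}$ represents infinitely many diffeomorphism types, which is precisely the assertion.

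The main obstacle is the passage from the genus-$\ge 1$ adjunction inequality to a bound valid on a full basis of $H_2(X;\mathbb{Z})/\mathrm{torsion}$: one must verify that the stabilization argument legitimately covers sphere classes and classes of nonnegative self-intersection, and, crucially, that the resulting bound $B(a)$ is genuinely independent of the Stein structure $J$, depending only on $G$ and $Q$. Once this structure-independent control of divisibility is secured, the remaining steps are elementary — the pigeonhole principle together with the observation that an infinite set of pairwise distinct nonnegative integers is unbounded.
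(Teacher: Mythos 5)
Your proof is correct, but it follows a genuinely different route from the paper's main argument --- in fact it essentially rediscovers the \emph{alternative} proof that the paper itself points out. The paper's primary proof introduces a new diffeomorphism invariant, the intersection genus $G_{Z,Q}$ (minimize over ordered bases realizing a fixed intersection matrix $Q$ the maximum of the minimal genera of the basis elements), and uses the adjunction inequality to derive the quantitative estimate $d_i+\min\{m_j\mid 1\leq j\leq n\}\leq 2G_i-2$; since the divisibilities $d_i$ are pairwise distinct they tend to infinity along a subsequence, hence so do the $Q$-genera $G_i$, and the diffeomorphism invariance of the $Q$-genus distinguishes infinitely many members directly. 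You instead fix a smooth manifold $X$, bound the divisibility of $c_1$ of \emph{every} Stein structure on $X$ by a quantity $D(X)$ built from the minimal genus function and the intersection form, and finish by pigeonhole; this is precisely the strategy of Proposition~\ref{prop:finiteness:exotic} combined with the pigeonhole principle, which the paper explicitly notes yields a second proof of the theorem. Your version is even somewhat lighter than that proposition: you bound only the divisibility (via the gcd-of-pairings description of divisibility and the torus-stabilization trick to cover genus-zero classes) rather than the whole finite set of Chern classes, and you avoid the handle-theoretic choice of chain bases used there. What the paper's main proof buys over yours is practicality: it produces an explicit, computable invariant that identifies \emph{which} members are non-diffeomorphic and can be applied to two given Stein 4-manifolds, whereas your pigeonhole argument is non-constructive --- the paper makes exactly this criticism of the alternative proof. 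Two minor points: the adjunction inequality as stated in Section~\ref{sec:Stein} already holds for genus-zero representatives of nonzero classes, so your stabilization step, while valid, is not strictly needed; and your bound $B(a)=2\max(G(a),1)-2-Q(a,a)$ depends on the orientation through $Q$, so when transporting Stein structures along arbitrary diffeomorphisms you should replace $-Q(a,a)$ by $+\left|Q(a,a)\right|$, or adopt the standing convention that Stein 4-manifolds carry their complex orientations and diffeomorphisms preserve them (the same convention is implicit in the paper's use of the $Q$-genus).
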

\begin{remark}(1) This theorem works for various families of pairwise exotic Stein 4-manifolds. For example, for any finitely presented group $G$, the algorithm obtained in \cite{Y6} can produce such infinite families with $\pi_1\cong G$. Moreover, the algorithm can realize a vast class of integral symmetric bilinear forms as intersection forms of such infinite families. However, the algorithm also produces various infinite families of exotic Stein 4-manifolds that do not satisfy the assumption of this theorem. 
\\
(2) The condition `at least infinitely many members' in this theorem cannot be strengthened to `all members' without additional constraints (see the proof of Proposition~\ref{prop:strengthen}). 
\end{remark}

To distinguish smooth structures, we introduce the \textit{intersection genus}, which is a simple genus invariant (i.e.\ a diffeomorphism invariant determined by the minimal genus function on a 4-manifold). For other genus invariants, we refer to the relative genus function in \cite{Y5} (cf.\ \cite{Y6}) and the genus rank function in \cite{G_GT}. See also Subsection~\ref{subsec:genus} for the background of these genus invariants. The intersection genus is simpler than these genus invariants but more useful for our purpose. Estimating intersection genera of Stein 4-manifolds by the adjunction inequality, we prove the above theorem. 

We also show the finiteness of the number of the first Chern classes of Stein structures on a given compact smooth 4-manifold with boundary (Proposition~\ref{prop:finiteness:exotic}). This result yields an alternative proof of Theorem~\ref{thm:Stein:exotic}, though this proof does not tell how to distinguish smooth structures on given two Stein 4-manifolds unlike the one relying on the intersection genus. 
\section{Compact Stein manifolds and the adjunction inequality}\label{sec:Stein}
In this section we briefly recall the basics of compact Stein manifolds. For the details, see \cite{OS1}, \cite{DGV} and \cite{CE}. We begin with a few definitions on Stein manifolds of arbitrary dimension. A compact Stein manifold is a compact complex manifold with boundary admitting a strictly plurisubharmonic function that is constant on the boundary and has no critical points on the boundary. If a compact Stein manifold $W$ of real dimension $2n$ admits a strictly plurisubharmonic function that is a Morse function without critical points of index $n$, then $W$ is called subcritical. If the Morse function on $W$ furthermore has no critical points of index one, then $W$ is called a subcritical Stein manifold without 1-handles.  A compact Stein manifold $W$ is called a Stein filling of a contact manifold $(M,\xi)$ if $(M,\xi)$ is contactomorphic to the contact structure on the boundary $\partial W$ given by the complex tangencies. 

Now we restrict our attention to the case of real dimension four. We call a compact connected oriented 4-dimensional handlebody a \textit{Stein handlebody}, if it consists of one 0-handle and 1- and 2-handles, and its 2-handles are attached along a Legendrian link in the standard tight contact structure on the boundary $\#_n S^1\times S^2$ of the subhandlebody consisting of $0$ and 1-handles, where the framing of each 2-handle is one less than the framing induced from the contact plane (i.e.\ contact framing). According to \cite{E1} (cf.\ \cite{G1}, \cite{GS}), any 4-dimensional Stein handlebody admits a Stein structure extending that of the 0-handle $D^4$, and conversely any compact Stein 4-manifold (i.e.\ compact Stein manifold of real dimension four) is diffeomorphic to a Stein handlebody. 

The following \textit{adjunction inequality} gives a strong constraints on the genera of smoothly embedded surfaces representing a given second homology class. This inequality, which is a result from Seiberg-Witten theory, plays a key role in this paper.

\begin{theorem}[\cite{LM1}, \cite{AM}, \cite{LM2}]Let $X$ be a compact Stein 4-manifold, and let $[\Sigma]$ be a non-zero class of $H_2(X;\mathbb{Z})$ represented by a smoothly embedded closed connected oriented surface $\Sigma$ of genus $g\geq 0$ in $X$. Then the following adjunction inequality holds. 
\begin{equation*}
\left| \langle c_1(Z), [\Sigma] \rangle \right|+ [\Sigma]\cdot [\Sigma]\leq 2g-2. 
\end{equation*}
\end{theorem}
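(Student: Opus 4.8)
The plan is to reduce the statement to the Seiberg--Witten adjunction inequality on a closed Kähler surface by means of a holomorphic embedding. The crucial input is the theorem of Lisca--Matić that any compact Stein 4-manifold $X$ embeds holomorphically into a minimal Kähler surface $X'$ of general type with $b_2^+(X')>1$, in such a way that the complex structure on the Stein domain is the restriction of that of $X'$. Granting such an embedding, the first Chern class of the Stein structure on $X$ is the restriction of $c_1(X')=-K_{X'}$, where $K_{X'}$ denotes the canonical class, and any smoothly embedded surface $\Sigma\subset X$ is simultaneously an embedded surface in $X'$ with the same genus and the same self-intersection number. It therefore suffices to establish the adjunction inequality for $\Sigma$ regarded as a surface in the closed manifold $X'$, and then to restrict.

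On the closed side I would invoke Seiberg--Witten theory. By the work of Witten and Taubes, a closed Kähler (indeed symplectic) surface with $b_2^+>1$ has nonvanishing Seiberg--Witten invariant for the canonical $\mathrm{Spin}^c$ structure, so $\pm K_{X'}$ are Seiberg--Witten basic classes. The generalized adjunction inequality for basic classes then yields, for a smoothly embedded closed connected oriented surface $\Sigma$ of genus $g$ in $X'$ with $[\Sigma]\neq 0$, the bound $|\langle K_{X'},[\Sigma]\rangle|+[\Sigma]\cdot[\Sigma]\leq 2g-2$. Since the pairing $\langle c_1,[\Sigma]\rangle=-\langle K_{X'},[\Sigma]\rangle$ differs only in sign and the inequality involves an absolute value, transferring back to $X$ produces precisely the asserted inequality.

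The main obstacle --- the part that genuinely requires work rather than bookkeeping --- is twofold. The first and principal difficulty is constructing the holomorphic embedding into a minimal surface of general type with $b_2^+>1$; this is the technical heart of the Lisca--Matić argument and is exactly what makes Seiberg--Witten theory applicable, since one needs a target whose canonical class is a basic class. The second subtlety is that the classical generalized adjunction inequality for basic classes is customarily stated under the hypothesis $[\Sigma]\cdot[\Sigma]\geq 0$, whereas the theorem here permits arbitrary nonzero classes, in particular those of negative self-intersection (the nonexistence of a smoothly embedded $(-1)$-sphere being a special case). I expect to handle this by exploiting the minimality of $X'$: a minimal surface of general type admits no smoothly embedded surface violating the adjunction relation even for negative self-intersection, which is again a consequence of Seiberg--Witten theory, the canonical class being the unique basic class up to sign together with the absence of embedded $(-1)$-spheres. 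Combining the embedding theorem, the computation of the basic classes, and this strong form of the adjunction inequality completes the argument.
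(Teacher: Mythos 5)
Your proposal is correct and follows essentially the same route as the paper: the paper likewise deduces the result by embedding the Stein domain into a closed minimal complex surface of general type with $b_2^+>1$ (Lisca--Mati\'c) and then citing the closed-manifold adjunction inequality from Seiberg--Witten theory, with the cited references (Fintushel--Stern, Kronheimer--Mrowka, Morgan--Szab\'o--Taubes, Ozsv\'ath--Szab\'o) covering exactly the negative self-intersection and genus-zero subtleties you flag. Your identification of minimality of the target as the key to those edge cases matches the role those references play in the paper's argument.
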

This well-known result follows from the adjunction inequality for closed 4-manifold (\cite{FS3}, \cite{KM1}, \cite{MST}, \cite{OzSz_ad}), since any compact Stein 4-manifold can be embedded into a closed minimal complex surface of general type with $b_2^+>1$ (\cite{LM1}). Note that the adjunction inequality for Stein 4-manifolds holds even in the genus zero case (cf.\ \cite{GS}, \cite{OS1}, \cite{AY5}), unlike the version for general closed 4-manifolds. 

\section{Contact open books on 5-manifolds}In this section, we briefly review the basics of contact 5-manifolds supported by contact open books. For the details, see \cite{DGV}. 
 
Let $(X,\psi)$ be a 5-dimensional abstract open book whose page is a compact Stein 4-manifold $X$ and whose monodromy is a symplectomorphism $\psi$ equal to the identity near the boundary. It is known that the Stein structure on the page $X$ and the monodromy $\psi$ induces a contact structure on the closed smooth 5-manifold given by $(X,\psi)$ (\cite{Gi}.\ cf.\ \cite{DGV}). The open book $(X,\psi)$ equipped with this contact structure is called the \textit{contact open book}. We say that a contact 5-manifold $(W,\xi)$ is supported by a contact open book $(X,\psi)$ if $(W,\xi)$ is contactomorphic to the contact 5-manifold given by the contact open book $(X,\psi)$. 

In the rest, we consider the case where the monodromy is the identity map, which is a symplectomorphism common to all Stein 4-manifolds. Relying on a result of Cieliebak~\cite{C}, Ding-Geiges-van Koert observed a characterization of  contact 5-manifolds supported by contact open books. 
\begin{theorem}[\cite{C}, Proposition~3.1 in \cite{DGV}]\label{thm:C}A closed contact 5-manifold admits a subcritical Stein filling if and only if the contact 5-manifold is supported by a contact open book whose page is a compact Stein 4-manifold and whose monodromy is the identity map. 
\end{theorem}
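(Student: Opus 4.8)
The plan is to prove both implications by relating a product Stein domain $X \times \mathbb{C}$ to the open book with trivial monodromy, invoking Cieliebak's structure theorem \cite{C} for the harder direction. The key geometric fact underlying everything is that $\partial(X \times D^2)$ carries a natural open book whose page is $X$ and whose monodromy is the identity.

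For the ``if'' direction, I would start from a compact Stein 4-manifold $X$ with an exhausting strictly plurisubharmonic Morse function $\phi$ whose critical points all have index at most $2$, and form the product Stein domain $X \times D^2$ of real dimension $6$, where $D^2 \subset \mathbb{C}$ and the plurisubharmonic function is $\phi(x) + |z|^2$. Its critical points are exactly the points $(p,0)$ with $d\phi_p = 0$, and their indices are unchanged, hence all at most $2 < 3$. Thus $X \times D^2$ has no critical point of the middle index $3$ and is subcritical. It then remains to check that the contact structure induced on $\partial(X \times D^2)$ by the complex tangencies is precisely the one supported by the contact open book $(X, \mathrm{id})$. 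Writing $\partial(X \times D^2) = (X \times S^1) \cup_{\partial X \times S^1} (\partial X \times D^2)$ and using the angular coordinate $\theta$ on the $D^2$-factor, the binding is $\partial X \times \{0\}$, each page is diffeomorphic to $X$, and the product structure forces the monodromy to be the identity. This exhibits $X \times D^2$ as a subcritical Stein filling of the contact 5-manifold supported by $(X, \mathrm{id})$.

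For the ``only if'' direction, I would take a closed contact 5-manifold $(W, \xi)$ admitting a subcritical Stein filling $F$ of real dimension $6$. By Cieliebak's theorem \cite{C}, $F$ is Stein deformation equivalent to a split Stein domain $X \times \mathbb{C}$ for some Stein domain $X$ of real dimension $4$, that is, a compact Stein 4-manifold. Since Stein deformation equivalence induces a contactomorphism of the contact boundaries, $(W, \xi)$ is contactomorphic to $\partial(X \times D^2)$ with its complex-tangency contact structure. Invoking the identification established in the first direction, this contact structure is supported by the contact open book $(X, \mathrm{id})$, as desired.

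The main obstacle I expect is the identification, common to both directions, of the complex-tangency contact structure on $\partial(X \times D^2)$ with the contact structure supported by the trivial open book $(X, \mathrm{id})$. This is the geometric crux: one must unwind the definition of ``supported'' --- a contact form positive on the binding and restricting to a symplectic form on each page --- and verify it against the contact form coming from $\phi(x) + |z|^2$, checking that the $S^1$-direction of the $D^2$-factor yields the open book fibration and that the Stein data restrict compatibly to pages and binding. Once this correspondence is in place, the subcriticality computation and the application of Cieliebak's splitting are comparatively routine.
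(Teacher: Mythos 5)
Your proposal is correct and takes essentially the same route as the paper, which treats this statement as a cited result and sketches precisely your two ingredients in Remark~\ref{rem:subcritical}: the product $X\times D^2$ (with plurisubharmonic function $\phi(x)+|z|^2$) as a subcritical Stein filling whose boundary contact structure is supported by $(X,\textnormal{id})$, citing the proof of Proposition~3.1 in \cite{DGV}, and Cieliebak's splitting theorem \cite{C} for the converse. The open-book compatibility check you flag as the geometric crux is exactly the content supplied by the proof of Proposition~3.1 in \cite{DGV}, so your outline matches the intended argument.
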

\begin{remark}\label{rem:subcritical}A compact Stein 4-manifold $X$ gives a subcritical Stein 6-manifold $X\times D^2$, and its boundary contact structure is supported by the contact open book $(X,\textnormal{id})$ . Furthermore, if $X$ is a Stein handlebody without 1-handles, then $X\times D^2$ is a subcritical Stein 6-manifold without 1-handles. For these facts, see the proof of Proposition~3.1 in \cite{DGV}. Conversely, a subcritical Stein 6-manifold $M$ is deformation equivalent to the Stein 6-manifold $X\times D^2$ for some 4-dimensional Stein handlebody $X$. If $M$ has no 1-handles, then we may assume that $X$ has no 1-handles. For the proof of these facts, see the proof of Theorem~1.1 in \cite{C} (cf.\ Section~14.4 in \cite{CE}). 
\end{remark}

Ding-Geiges-van Koert classified contact 5-manifolds admitting subcritical Stein fillings without 1-handles. We note that, due to the above remark, a contact 5-manifold admits a subcritical Stein filling without 1-handles if and only if it is supported by a contact open book whose page is a Stein handlebody without 1-handles and whose monodromy is the identity map.

\begin{theorem}[Ding-Geiges-van Koert, Theorem 4.8 in \cite{DGV}]\label{thm:DGV}Let $(W_1,\xi_1)$ and $(W_2, \xi_2)$ be two closed contact 5-manifolds admitting subcritical Stein fillings without 1-handles. If there exists an isomorphism $H^2(W_1;\mathbb{Z})\to H^2(W_2;\mathbb{Z})$ that sends $c_1(\xi_1)$ to $c_1(\xi_2)$, then $(W_1,\xi_1)$ and $(W_2, \xi_2)$ are contactomorphic to each other. 
\end{theorem}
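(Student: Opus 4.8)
The plan is to first read off the diffeomorphism type of $W_1,W_2$ and the homotopy class of their almost contact structures from the hypothesis, and then to upgrade a diffeomorphism to a contactomorphism using the flexibility of subcritical fillings. First I would describe the underlying smooth $5$-manifolds. By hypothesis each $(W_i,\xi_i)$ bounds a subcritical Stein $6$-manifold $N_i$ without $1$-handles; by Remark~\ref{rem:subcritical} one may take $N_i=X_i\times D^2$, but all I need is that $N_i$ is built from a single $0$-handle and $2$-handles. Since the attaching spheres of $6$-dimensional $2$-handles are circles, which are unknotted and unlinked in $S^5$ with framings recorded only by $\pi_1(SO(4))=\mathbb{Z}/2$, the manifold $N_i$ is a boundary connected sum of copies of $S^2\times D^4$ and of the twisted disc bundle $S^2\widetilde{\times}D^4$. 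Consequently $W_i=\partial N_i$ is a closed simply connected $5$-manifold, namely a connected sum of copies of $S^2\times S^3$ and $S^2\widetilde{\times}S^3$; in particular $H^2(W_i;\mathbb{Z})$ is free, and by the Smale--Barden classification $W_i$ is determined up to diffeomorphism by $b_2(W_i)$ together with its second Stiefel--Whitney class $w_2(W_i)$.

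Next I would exploit the relation $w_2(W_i)=c_1(\xi_i)\bmod 2$, which holds because $TW_i=\xi_i\oplus\underline{\mathbb{R}}$ and $\xi_i$ is a complex rank-$2$ bundle. The given isomorphism $\phi\colon H^2(W_1;\mathbb{Z})\to H^2(W_2;\mathbb{Z})$ with $\phi(c_1(\xi_1))=c_1(\xi_2)$ immediately forces $b_2(W_1)=b_2(W_2)$, and its reduction mod $2$ carries $w_2(W_1)$ to $w_2(W_2)$; in particular $W_1$ is spin if and only if $W_2$ is. By the previous paragraph $W_1$ and $W_2$ are therefore diffeomorphic. Moreover, since $\phi$ is compatible with $w_2$ and $H^2$ is torsion-free, the known structure of the mapping class groups of these connected sums of sphere bundles lets me realize the appropriate automorphism by a self-diffeomorphism, and hence choose a diffeomorphism $f\colon W_1\to W_2$ with $f^*c_1(\xi_2)=c_1(\xi_1)$.

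It then remains to promote $f$ to a contactomorphism. I would compare $\xi_1$ and $f^*\xi_2$ as almost contact structures on $W_1$. Both are corank-$1$ complex distributions, i.e.\ sections of a bundle with fiber $SO(5)/U(2)$, a space with $\pi_2=\mathbb{Z}$ and $\pi_3=\pi_4=\pi_5=0$ (indeed $SO(5)/U(2)\cong\mathbb{CP}^3$); since $W_1$ is simply connected of dimension $5$, it is standard that homotopy classes of almost contact structures are then classified by $c_1\in H^2(W_1;\mathbb{Z})$. As $c_1(f^*\xi_2)=f^*c_1(\xi_2)=c_1(\xi_1)$, the almost contact structures $\xi_1$ and $f^*\xi_2$ are homotopic. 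Finally, both $\xi_1$ and $f^*\xi_2$ are subcritically fillable, so I would invoke the flexibility of subcritical Weinstein/Stein domains (\cite{C},\cite{CE}): a subcritically fillable contact structure on a fixed closed manifold is determined up to contactomorphism by the homotopy class of its almost contact structure. Hence $\xi_1$ and $f^*\xi_2$ are contactomorphic, and therefore so are $(W_1,\xi_1)$ and $(W_2,\xi_2)$.

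The main obstacle is this last step. The first two paragraphs are purely topological bookkeeping, and the comparison of almost contact structures in the third is soft obstruction theory; what is genuinely deep is the passage from homotopic almost contact structures to an honest contactomorphism, which rests on the h-principle for subcritical (flexible) Weinstein structures. The role of the $c_1$-matching hypothesis is precisely to verify the two inputs of that flexibility result --- a diffeomorphism of the underlying $5$-manifolds and a homotopy of their formal (almost contact) data.
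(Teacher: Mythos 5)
The paper itself does not prove this statement---it is quoted verbatim as Theorem~4.8 of \cite{DGV}---so your attempt can only be measured against the strategy of \cite{DGV} and the literature it cites. Your first three paragraphs are essentially correct and follow the natural route: a subcritical filling without 1-handles is a boundary connected sum of $D^4$-bundles over $S^2$, so each $W_i$ is $\#_n S^2\times S^3$ or $\#_n S^2\widetilde{\times}S^3$; the $c_1$-compatible isomorphism matches $b_2$ and $w_2$, so Smale--Barden gives a diffeomorphism; and almost contact structures on simply connected 5-manifolds are indeed classified by $c_1$. (The realization of an arbitrary $w_2$-compatible automorphism of $H^2$ by a self-diffeomorphism is a genuine input that deserves a concrete citation---it follows from Barden's methods or from handle slides on the filling---rather than an appeal to ``the known structure of the mapping class groups,'' but it is true.)

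The genuine gap is the last step. The statement you invoke---``a subcritically fillable contact structure on a fixed closed manifold is determined up to contactomorphism by the homotopy class of its almost contact structure''---is not contained in \cite{C} or \cite{CE}. What flexibility actually provides (Cieliebak--Eliashberg) is a statement about \emph{fillings}: two subcritical (more generally flexible) Weinstein structures on the \emph{same compact domain} whose almost complex structures are homotopic are Weinstein homotopic, whence their boundary contact structures are isotopic by Gray stability. To apply it here you must lift the comparison from the boundaries to the fillings: (i) show the two fillings $N_1,N_2$ are diffeomorphic---this is where the hypothesis ``without 1-handles'' must be used a \emph{second} time (both are $\natural_n S^2\times D^4$, or both $\natural_n S^2\widetilde{\times}D^4$, as forced by $w_2$ of the common boundary), whereas your write-up uses that hypothesis only in the first paragraph; for general subcritical fillings, with 1-handles allowed, this comparison runs into exactly the Andrews--Curtis-type subtleties discussed around Lemma~\ref{lem:AC} and Question~\ref{question:Andrews}, and the boundary-level uniqueness you assert was not available at the time of \cite{DGV}; and (ii) choose the diffeomorphism $F\colon N_1\to N_2$ so that $F^*c_1(J_2)=c_1(J_1)$ and so that its boundary restriction implements your $f^*$ on $H^2$---possible because restriction $H^2(N_i;\mathbb{Z})\to H^2(W_i;\mathbb{Z})$ is an isomorphism for these 2-handlebodies and handle slides realize the needed automorphisms. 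With (i) and (ii) supplied your argument closes, so the skeleton is right, but as written the crucial step is a black box stronger than the cited results. For comparison, the proof in \cite{DGV} is diagrammatic: it puts the isotropic attaching circles of the 6-dimensional 2-handles into standard position using the h-principle for subcritical isotropic embeddings and then realizes changes of basis by handle slides---a different, more hands-on implementation of the same underlying flexibility.
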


The lemma below was observed by Ding-Geiges-van Koert (see the proof of Proposition 4.5 in \cite{DGV}). 

\begin{lemma}[\cite{DGV}]\label{lem:iso}
For a compact Stein 4-manifold $X$ and a closed contact 5-manifold $(W,\xi)$ supported by the contact open book $(X, \textnormal{id})$, there exists an isomorphism $H^2(X;\mathbb{Z})\to H^2(W;\mathbb{Z})$ that maps $c_1(X)$ to $c_1(\xi)$. 
\end{lemma}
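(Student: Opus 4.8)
The plan is to realize $(W,\xi)$ as the boundary of the explicit subcritical Stein filling $V:=X\times D^2$ and to transport cohomology across the boundary inclusion. By Remark~\ref{rem:subcritical}, the contact open book $(X,\textnormal{id})$ supports the contact structure on $\partial(X\times D^2)$ given by the complex tangencies, so up to contactomorphism I may take $W=\partial V$ and $\xi=TW\cap J(TW)$, where $J$ is the product complex structure on $V$. Since the desired conclusion --- the existence of an isomorphism on $H^2$ carrying one first Chern class to the other --- is invariant under replacing $(W,\xi)$ by a contactomorphic copy, this normalization is harmless. I would then propose the composite $\Phi:=i^{*}\circ\pi^{*}\colon H^2(X;\mathbb{Z})\to H^2(V;\mathbb{Z})\to H^2(W;\mathbb{Z})$ as the required isomorphism, where $\pi\colon V\to X$ is the projection and $i\colon W\hookrightarrow V$ the inclusion.

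First I would check that both factors are isomorphisms. The map $\pi^{*}$ is an isomorphism because $\pi$ is a homotopy equivalence ($D^2$ being contractible). For $i^{*}$ I would invoke the long exact sequence of the pair $(V,W)$ together with Lefschetz duality $H^{k}(V,W;\mathbb{Z})\cong H_{6-k}(V;\mathbb{Z})\cong H_{6-k}(X;\mathbb{Z})$. The two relevant terms are $H^{2}(V,W)\cong H_{4}(X)$ and $H^{3}(V,W)\cong H_{3}(X)$. Since a compact Stein 4-manifold is a Stein handlebody built from handles of index at most $2$ (Section~\ref{sec:Stein}), $X$ has the homotopy type of a CW complex of dimension $\le 2$, whence $H_{3}(X)=H_{4}(X)=0$. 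The segment $H^{2}(V,W)\to H^{2}(V)\xrightarrow{i^{*}}H^{2}(W)\to H^{3}(V,W)$ then reads $0\to H^{2}(V)\xrightarrow{i^{*}}H^{2}(W)\to 0$, so $i^{*}$, and hence $\Phi$, is an isomorphism.

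It remains to match the Chern classes, i.e.\ to show $\Phi(c_1(X))=c_1(\xi)$. On the filling, the product complex structure gives $TV\cong\pi^{*}TX\oplus(\text{pullback of }TD^2)$, and since $H^2(D^2)=0$ the second summand contributes nothing, so $c_1(V)=\pi^{*}c_1(X)$. On the boundary, the complex tangencies sit in a short exact sequence of complex bundles $0\to\xi\to TV|_{W}\to\nu\to 0$, where the quotient line bundle $\nu$ is the image of the $J$-complex line spanned by the outward normal $n$; this line complements $\xi$ in $TV|_{W}$ and is trivialized by the nowhere-zero section $n$, so $\nu$ is trivial and $c_1(\xi)=c_1(TV)|_{W}=i^{*}c_1(V)$. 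Combining, $\Phi(c_1(X))=i^{*}\pi^{*}c_1(X)=i^{*}c_1(V)=c_1(\xi)$, as required.

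The homology vanishing $H_{3}(X)=H_{4}(X)=0$ is the key input, but it is immediate from the handle structure of Stein 4-manifolds recalled in Section~\ref{sec:Stein}. I expect the only point genuinely requiring care to be the Chern-class bookkeeping on the boundary --- specifically, checking that the complex line bundle $\nu$ transverse to $\xi$ inside $TV|_{W}$ is trivial, so that passing from $c_1(TV)|_{W}$ to $c_1(\xi)$ costs nothing.
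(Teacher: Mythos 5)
Your proof is correct, and there is nothing in the paper to weigh it against: the paper does not prove this lemma at all, but attributes it to Ding--Geiges--van Koert, pointing to the proof of Proposition~4.5 in \cite{DGV}. Your argument supplies the details in essentially the expected way, and each step checks out. The normalization of $(W,\xi)$ to $\partial(X\times D^2)$ with its complex tangencies is licensed by Remark~\ref{rem:subcritical}, and the conclusion is indeed contactomorphism-invariant, since a (co-orientation preserving) contactomorphism pulls back first Chern classes of the contact structures. The map $\pi^*$ is an isomorphism by homotopy invariance; $i^*$ is an isomorphism by the long exact sequence of the pair together with Lefschetz duality and the vanishing $H_3(X)=H_4(X)=0$, which follows from the index~$\leq 2$ handle decomposition of a Stein domain recalled in Section~\ref{sec:Stein} (for $H_4$ it also follows just from $\partial X\neq\emptyset$). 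The Chern class bookkeeping is also right: the span of the outward normal $n$ and $Jn$ is a $J$-invariant complex line complementary to $\xi$ in $TV|_W$ and trivialized by the section $n$, so $c_1(\xi)=i^*c_1(V)=i^*\pi^*c_1(X)$. The one point you pass over silently is that producing an honest Stein domain from $X\times D^2$ requires smoothing the corner along $\partial X\times S^1$; this is harmless here, since the smoothed domain is diffeomorphic to $X\times D^2$ and carries the restriction of the product complex structure, which is all your computation uses. In short, your write-up is a correct, self-contained proof of a statement the paper only cites, and it follows the same route (transporting $c_1$ of the subcritical filling $X\times D^2$ to the boundary) as the argument in \cite{DGV}.
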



We recall the following terminology in \cite{AY7}, which is useful for describing contact 5-manifolds. 

\begin{definition}
For a 4-dimensional Stein handlebody $X$ without 1-handles, we define the rotation divisor of $X$, denoted by $r(X)$, as the greatest common divisor of the rotation numbers of the attaching circles of the 2-handles of $X$. If $b_2(X)=0$, or if all the rotation numbers of the attaching circles are $0$, then we define $r(X)$ by $r(X)=0$. 
Note that each attaching circle is a Legendrian knot in the standard tight contact structure on $S^3$. 
\end{definition}

Let $S^2\widetilde{\times} S^3$ denote the total space of the non-trivial $S^3$-bundle over $S^2$. The following proposition was observed by Akbulut and the author. This easily follows from Theorems~\ref{thm:C} and \ref{thm:DGV}, Lemma~\ref{lem:iso} and Gompf's first Chern class formula for Stein handlebodies~\cite{G1}. 

\begin{proposition}[\cite{AY7}]\label{prop:rotation} For two 4-dimensional Stein handlebodies $X_1, X_2$ with $b_2=n$ and without 1-handles, the contact 5-manifolds $(W_1, \xi_1)$ and $(W_2,\xi_2)$ supported by the contact open books $(X_1,\textnormal{id})$ and $(X_2,\textnormal{id})$ are contactomorphic to each other if and only if $r(X_1)=r(X_2)$. Furthermore, $W_1$ is diffeomorphic to $\#_nS^2\times S^3$  $($resp.\ $\#_n S^2\tilde{\times} S^3$$)$, if $r(X_1)$ is even $($resp.\  odd$)$.
\end{proposition}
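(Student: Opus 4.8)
The plan is to reduce the statement entirely to the three ingredients cited immediately before it: Theorem~\ref{thm:C}, Theorem~\ref{thm:DGV}, and Lemma~\ref{lem:iso}, together with Gompf's first Chern class formula. I would organize the argument around the single numerical quantity $r(X)$ and show that it governs both the contactomorphism type and the diffeomorphism type of the supporting $5$-manifold.

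Let me think about what needs to happen. The proposition has two assertions: a criterion for contactomorphism in terms of $r(X_1) = r(X_2)$, and a computation of the diffeomorphism type of $W_1$ according to the parity of $r(X_1)$.

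Let me start with the contactomorphism criterion. By Theorem~\ref{thm:DGV}, since both $(W_i,\xi_i)$ admit subcritical Stein fillings without $1$-handles (which they do, by Remark~\ref{rem:subcritical}, because each $X_i$ is a Stein handlebody without $1$-handles and so $X_i \times D^2$ is a subcritical Stein filling without $1$-handles), the two contact $5$-manifolds are contactomorphic exactly when there is an isomorphism $H^2(W_1;\mathbb{Z}) \to H^2(W_2;\mathbb{Z})$ carrying $c_1(\xi_1)$ to $c_1(\xi_2)$. By Lemma~\ref{lem:iso}, I have isomorphisms $H^2(X_i;\mathbb{Z}) \cong H^2(W_i;\mathbb{Z})$ sending $c_1(X_i)$ to $c_1(\xi_i)$, so the contactomorphism question is equivalent to the existence of an isomorphism $H^2(X_1;\mathbb{Z}) \to H^2(X_2;\mathbb{Z})$ matching $c_1(X_1)$ with $c_1(X_2)$. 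Since both $X_i$ have no $1$-handles and $b_2 = n$, their cohomology $H^2(X_i;\mathbb{Z}) \cong \mathbb{Z}^n$ is free, generated by the cocores of the $2$-handles. Here Gompf's formula enters: $\langle c_1(X_i), h_j\rangle$ equals the rotation number of the $j$-th attaching circle, where $h_j$ are the classes dual to the $2$-handles. Thus $c_1(X_i)$, as an element of $\mathbb{Z}^n$, is precisely the vector of rotation numbers, and the existence of an automorphism of $\mathbb{Z}^n$ carrying one such vector to the other is equivalent to the two vectors being related by an element of $GL_n(\mathbb{Z})$ --- which holds exactly when they have the same divisibility, i.e.\ the same greatest common divisor of entries, namely $r(X_1) = r(X_2)$. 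I should be careful about the degenerate case $r = 0$ (the zero vector), which is matched only with the zero vector, consistent with the definition.

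For the diffeomorphism statement, the cleanest route is to observe that $W_i$ is the boundary of the subcritical Stein $6$-manifold $X_i \times D^2$, and a subcritical Stein domain without $1$-handles with $b_2 = n$ is built from $D^6$ by attaching $n$ $3$-handles (each $2$-handle of $X_i$ becomes a $3$-handle after crossing with $D^2$). The boundary of $D^6$ with $n$ $3$-handles attached is a connected sum of $n$ copies of an $S^3$-bundle over $S^2$, and the only two such bundles are $S^2 \times S^3$ and $S^2 \widetilde{\times} S^3$, distinguished by the parity of the relevant framing data; concretely the bundle is trivial or not according to whether the rotation data is even or odd. I would make this precise by identifying how the contact $5$-manifold's second Stiefel--Whitney class, equivalently $c_1(\xi_i) \bmod 2$, detects the bundle type, and then invoke the already-established identification $c_1(\xi_i) \leftrightarrow (\text{rotation numbers})$ to conclude that $r(X_1)$ even yields $\#_n S^2\times S^3$ and $r(X_1)$ odd yields $\#_n S^2\widetilde{\times}S^3$.

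The main obstacle is the careful bookkeeping in the contactomorphism step: translating "there exists an isomorphism of $\mathbb{Z}^n$ sending one integer vector to another" into the gcd condition, and verifying this is genuinely equivalent to $r(X_1) = r(X_2)$ rather than merely implied by it. The key linear-algebra fact is that two primitive vectors in $\mathbb{Z}^n$ ($n \geq 1$) always lie in the same $GL_n(\mathbb{Z})$-orbit, so a general vector's orbit is determined exactly by its content (the gcd of its coordinates); matching $c_1(X_1)$ to $c_1(X_2)$ is therefore possible precisely when the contents agree. I expect the diffeomorphism computation to be comparatively routine given the handle decomposition of $X_i \times D^2$, modulo correctly pinning down the parity-to-bundle-type correspondence, which I would extract from the $w_2$ / $\bmod 2$ reduction of the first Chern class.
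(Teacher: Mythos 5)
Your treatment of the contactomorphism criterion is correct, and it is exactly the route the paper indicates (the paper itself offers no more than the sentence that the proposition ``easily follows'' from Theorems~\ref{thm:C} and \ref{thm:DGV}, Lemma~\ref{lem:iso} and Gompf's formula): Remark~\ref{rem:subcritical} makes Theorem~\ref{thm:DGV} applicable, Lemma~\ref{lem:iso} transfers the question to the pairs $(H^2(X_i;\mathbb{Z}),c_1(X_i))$, Gompf's formula identifies $c_1(X_i)$ with the vector of rotation numbers in $\mathbb{Z}^n$, and the $GL_n(\mathbb{Z})$-orbit of an integer vector is classified by the gcd of its entries. (One small point worth making explicit: the ``contactomorphic $\Rightarrow$ isomorphism matching $c_1$'' direction is not part of Theorem~\ref{thm:DGV} as stated; it follows from naturality of the first Chern class under a contactomorphism. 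You use this implicitly.)

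There is, however, a genuine error in your argument for the diffeomorphism type: you assert that each $2$-handle of $X_i$ becomes a $3$-handle of $X_i\times D^2$. Taking a product with $D^2$ preserves handle index --- the $4$-dimensional $2$-handle $D^2\times D^2$ attached along $S^1\times D^2$ becomes $D^2\times D^4$ attached along $S^1\times D^4$, a $6$-dimensional $2$-handle --- and indeed if $X_i\times D^2$ had $3$-handles it would not be subcritical, contradicting what you correctly use elsewhere in the same sentence. The error is not cosmetic: an attaching $2$-sphere of a $6$-dimensional $3$-handle has normal framings classified by $\pi_2(SO(3))=0$, so $3$-handle attachments to $D^6$ admit no framing choice and could never produce the twisted bundle $S^2\widetilde{\times}S^3$; the parity dichotomy you are trying to establish cannot arise from your handle description. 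With the correct indices the strategy does go through: the attaching circles form an unlink in $S^5$ (codimension $4$), the framing of each $6$-dimensional $2$-handle is the image of the Stein framing $\mathrm{tb}-1$ under the stabilization $\pi_1(SO(2))\cong\mathbb{Z}\to\pi_1(SO(4))\cong\mathbb{Z}/2$, and $\mathrm{tb}-1\equiv \mathrm{rot} \pmod 2$ for Legendrian knots in the standard tight $S^3$; hence each surgery yields the trivial (resp.\ twisted) $S^3$-bundle over $S^2$ exactly when the corresponding rotation number is even (resp.\ odd), and $r(X_1)$ is even if and only if all rotation numbers are even. Your fallback via $w_2(W_1)\equiv c_1(\xi_1)\bmod 2$ is sound as a way to detect the spin/non-spin dichotomy, but it still requires the corrected handle description (or Barden's classification of simply connected $5$-manifolds) to know in the first place that $W_1$ is a connected sum of $S^3$-bundles over $S^2$, so it does not let you bypass the fix.
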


This proposition leads to the following definition. 
\begin{definition}[\cite{AY7}]\label{def:classification}
For non-negative integers $n$ and $r$, we define $(S_{n,r}, \zeta_{n,r})$ as the closed contact 5-manifold supported by a contact open book $(X,\textnormal{id})$, where $X$ is a 4-dimensional Stein handlebody without 1-handles satisfying $b_2(X)=n$ and $r(X)=r$. 
\end{definition}

\begin{remark}\label{rem:classification}
$(1)$ By Proposition~\ref{prop:rotation}, $(S_{n,r},\zeta_{n,r})$ and $(S_{n',r'},\zeta_{n', r'})$ are contactomorphic to each other if and only if $n=n'$ and $r=r'$. Furthermore, $S_{r,n}$ is diffeomorphic to $\#_nS^2\times S^3$ $($resp.\ $\#_n S^2\tilde{\times} S^3$$)$, if $r$ is even $($resp.\  odd$)$.\\
$(2)$ By Theorem~\ref{thm:C} and Remark~\ref{rem:subcritical}, any closed contact 5-manifold admitting a subcritical Stein filling without 1-handles is contactomorphic to some $(S_{n,r}, \zeta_{n,r})$. 
\end{remark}

Here we observe that the contact 5-manifold supported by a contact open book can admit a subcritical Stein filling without 1-handles, even if the page has 1-handles. Let us recall that a handlebody determines a presentation of its fundamental group: 1- and 2-handles corresponds to the generators and relators, respectively (cf.\ \cite{GS}).

\begin{lemma}[cf.\ Ding-Geiges-van Koert~\cite{DGV}]\label{lem:AC}Let $X$ be a 4-dimensional Stein handlebody. If all the generators of the presentation of $\pi_1(X)$ given by the handlebody structure are removed by Andrews-Curtis moves, then the contact 5-manifold supported by the contact open book $(X, \textnormal{id})$ admits a subcritical Stein filling without 1-handles. 
\end{lemma}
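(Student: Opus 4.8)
The plan is to begin from the canonical subcritical Stein filling and to simplify its handle structure using the Andrews--Curtis hypothesis, keeping everything Stein by exploiting the flexibility of subcritical Stein manifolds. By Remark~\ref{rem:subcritical}, the product $X\times D^2$ is a subcritical Stein $6$-manifold whose boundary contact structure is supported by $(X,\textnormal{id})$; thus $X\times D^2$ is a subcritical Stein filling of the given contact $5$-manifold $(W,\xi)$. Since $X\times D^2$ deformation retracts onto $X$, its Stein handle decomposition consists of one $0$-handle together with one $1$-handle (resp.\ $2$-handle) for each $1$-handle (resp.\ $2$-handle) of $X$, and the presentation of $\pi_1(X\times D^2)=\pi_1(X)$ read off from these $1$- and $2$-handles is precisely the handlebody presentation in the hypothesis. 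In particular the assumption that all generators can be removed forces this presentation to present the trivial group, so $X\times D^2$ is simply connected.

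Next I would translate the Andrews--Curtis trivialization into handle moves on $X\times D^2$. The elementary moves that multiply, invert, cyclically permute, or conjugate relators are realized by handle slides of the $2$-handles over the $1$- and $2$-handles, while deleting a generator together with a relator equal to that single generator is realized by cancelling a $1$/$2$-handle pair in which the $2$-handle runs geometrically once over the $1$-handle. Since all generators are removed, the resulting sequence of handle moves ends with every $1$-handle cancelled against such a $2$-handle, leaving a handlebody with one $0$-handle and only $2$-handles, i.e.\ without $1$-handles.

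The step I expect to be the main obstacle is that Andrews--Curtis equivalence is a priori only algebraic data about $\pi_1$, whereas the handle moves above require genuine isotopies of the embedded, framed attaching spheres; in general the former does not imply the latter --- indeed this failure in dimension four is exactly what the genus estimates elsewhere in this paper detect. To bridge the gap I would invoke the flexibility of subcritical Stein (Weinstein) structures due to Cieliebak (the techniques behind Theorem~1.1 in \cite{C}; cf.\ Section~14.4 in \cite{CE}): every handle of $X\times D^2$ has index $1$ or $2$, strictly below the critical index $n=3$, so each attaching sphere is isotropic and its isotopy class is governed by the relevant $h$-principle. Consequently the homotopy-theoretic information carried by the Andrews--Curtis moves suffices to realize the required slides and cancellations by an actual Stein deformation, producing a subcritical Stein $6$-manifold $M'$ without $1$-handles that is Stein deformation equivalent to $X\times D^2$.

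Finally, since Stein deformation equivalent manifolds have contactomorphic contact boundaries, $\partial M'$ is contactomorphic to $(W,\xi)$. Hence $M'$ is a subcritical Stein filling of $(W,\xi)$ without $1$-handles, which is the assertion of the lemma.
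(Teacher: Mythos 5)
Your proposal is correct, but it runs the argument at a different level from the paper. The paper's own proof stays with the 4-dimensional page: citing Section~6 of \cite{DGV}, it observes that Andrews--Curtis moves on the presentation of $\pi_1(X)$ can be realized by moves on the page that preserve the supported contact 5-manifold, so $X$ may be replaced by a Stein handlebody $X'$ without 1-handles supporting the same $(W,\xi)$, and Remark~\ref{rem:subcritical} then supplies the subcritical filling $X'\times D^2$ without 1-handles. You never change the page: you deform the 6-dimensional filling $X\times D^2$ itself by Weinstein handle moves, using the fact that attaching circles in the 5-dimensional boundary satisfy homotopy-implies-isotopy, the subcritical $h$-principle to upgrade these smooth isotopies to isotropic ones, and Weinstein handle cancellation to remove the 1-handles. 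Your identification of the key obstacle --- that Andrews--Curtis moves are algebraic while cancellation needs geometric intersection number one, which is precisely what fails in dimension four --- and its resolution by subcritical flexibility is exactly right, and it is the same technology that underlies both Cieliebak's splitting theorem \cite{C} and the diagram moves of \cite{DGV}. What the paper's route buys is an explicit new open book with a 1-handle-free page, so the contact manifold can be identified with some $(S_{n,r},\zeta_{n,r})$ of Definition~\ref{def:classification}; what your route buys is independence from the diagrammatic formalism of \cite{DGV}, at the cost of invoking the Weinstein-homotopy machinery ($h$-principle for subcritical isotropic embeddings and handle cancellation, cf.\ \cite{CE}) as a black box.
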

For the definition of Andrews-Curtis moves, see \cite{DGV}. The proof of this lemma is straightforward from Section 6 in \cite{DGV}: one can alter the page $X$ so that it has no 1-handles, preserving the supporting contact 5-manifold. It seems still unknown whether there exists a finite presentation of the trivial group that do not satisfy the assumption of the above lemma. 

\begin{question}\label{question:Andrews}Can every finite presentation of the trivial group be changed into a presentation having no generators by Andrews-Curtis moves?
\end{question}

According to the Andrews-Curtis conjecture~\cite{AC}, this question is affirmative for any balanced presentation of the trivial group, but there are potential counterexamples to the conjecture (cf.\ \cite{GS}). 
\section{Smooth structures on Stein 4-manifolds}
In this section, we first introduce the intersection genus, which is a simple but effective genus invariant of smooth 4-manifolds. Using this invariant, we prove Theorems~\ref{thm:Stein:exotic} and \ref{intro:thm:main}. 
We also show the finiteness of the number of the first Chern classes of Stein structures on a given compact smooth 4-manifold (Proposition~\ref{prop:finiteness:exotic}). 

\subsection{Intersection genera of 4-manifolds}\label{subsec:genus}
For a smooth 4-manifold $Z$, the \textit{minimal genus function} $g_Z: H_2(Z;\mathbb{Z})\to \mathbb{Z}$ (cf.\ \cite{GS}) is defined by 
\begin{equation*}
g_Z(\alpha)=\min\{g\mid \text{$\alpha$ is represented by a smoothly embedded surface of genus $g$.}\}
\end{equation*}
Minimal genus functions have useful informations on smooth structures, but it is hard to determine the values of the functions. Indeed the functions have been determined for very few 4-manifolds (e.g.\ $\mathbb{CP}^2$). Also, it is generally difficult to distinguish smooth structures by the functions, since there are many ways to identify second homology groups of two distinct 4-manifolds. 

To avoid these difficulties, the author~\cite{Y5} (cf.\ \cite{Y6}) introduced the relative genus function, which is a genus invariant (i.e.\ a diffeomorphism invariant determined by the minimal genus function on a 4-manifold). Subsequently, Gompf~\cite{G_GT} introduced a different genus invariant called the genus rank function. Here we introduce yet another genus invariant, which is simpler than these invariants but more useful for proving Theorem~\ref{thm:Stein:exotic}.  

Let $Z$ be an oriented smooth (possibly non-closed) 4-manifold with $0<b_2<\infty$. For simplicity we assume $H_2(Z;\mathbb{Z})$ has no torsion, though we can similarly define the invariant when $H_2(Z;\mathbb{Z})$ has torsion. We put $n=b_2(Z)$. 
\begin{definition}For an ordered basis $\mathbf{v}=\{v_1,v_2,\dots,v_n\}$ of $H_2(Z;\mathbb{Z})$, we define a non-negative integer $G_Z(\mathbf{v})$ by 
\begin{equation*}
G_Z(\mathbf{v})=\max \{g_Z(v_i)\mid 1\leq i\leq n\}.
\end{equation*}
For an intersection matrix $Q$ of $Z$ (i.e.\ a matrix representing the intersection form of $Z$), we then define a non-negative integer $G_{Z,Q}$ by 
\begin{align*}
G_{Z,Q}=\min \{G_Z(\mathbf{v})\mid \text{$\mathbf{v}$ is an ordered } &\text{basis of $H_2(Z;\mathbb{Z})$}\\
 &\text{whose intersection matrix is $Q$.}\}. 
\end{align*}
\end{definition}
We call $G_{Z,Q}$ the \textit{intersection genus} of $Z$ with respect to $Q$ (\textit{$Q$-genus} of $Z$ for short). It is straightforward to see that the $Q$-genus is a diffeomorphism invariant of 4-manifolds for any fixed intersection matrix $Q$. Namely, if 4-manifolds $Z$ and $Z'$ are orientation-preserving diffeomorphic to each other, then $G_{Z,Q}=G_{Z',Q}$ for any intersection matrix $Q$.

\subsection{Coarsely distinguishing smooth structures}\label{subsec:distinguish}
For an element $\alpha$ in a finitely generated abelian group $G$, we define the \textit{divisibility} $d(\alpha)$ of $\alpha$ by 
\begin{equation*}
d(\alpha)=\left\{
\begin{array}{ll}
\max\{n\in \mathbb{Z}\mid \text{$[\alpha]=n\alpha'$ for some $\alpha'\in G/\textnormal{Tor}$}\},&\text{if $\alpha$ is not torsion;}\\
0, &\text{if $\alpha$ is torsion.}
\end{array}
\right.
\end{equation*}
Note that $d(\alpha)$ is a non-negative integer. 
\begin{remark}We use this unnatural definition in order to relax the assumption of Theorem~\ref{thm:Stein:exotic} and to obtain stronger estimates of intersection genera in the proof. 
This theorem also works under the natural definition below. 
\begin{equation*}
d(\alpha)=\left\{
\begin{array}{ll}
\max\{n\in \mathbb{Z}\mid \text{$\alpha=n\alpha'$ for some $\alpha'\in G$}\},&\text{if $\alpha$ is not torsion;}\\
0, &\text{if $\alpha$ is torsion.}
\end{array}
\right.
\end{equation*}
\end{remark}

Here we prove Theorem~\ref{thm:Stein:exotic} using intersection genera. 

\newtheorem*{thm:Stein:exotic:restate}{Theorem~\ref{thm:Stein:exotic}}

\begin{thm:Stein:exotic:restate}For a given infinite family of compact Stein 4-manifolds, if the divisibilities of the first Chern classes of the members are pairwise distinct, then at least infinitely many members are pairwise non-diffeomorphic. 
\end{thm:Stein:exotic:restate}
\begin{proof}Let $\{X_i\mid i\in \mathbb{N}\}$ be an infinite family of compact Stein 4-manifolds in the assumption of this theorem, and let $d_i$ be the divisibility of $c_1(X_i)$. Without loss of generality, we may assume that $\{d_i\}_{i\in \mathbb{N}}$ is a strictly increasing sequence. It suffices to prove the theorem in the case where all the members have the same intersection form. We put $n=b_2(X_1)$ and fix an intersection matrix $Q$ of $X_1$. The assumption on the divisibilities guarantees $n\geq 1$. Note that each $H_2(X_i;\mathbb{Z})$ has no torsion, since any Stein 4-manifold has a handle decomposition without 3- and 4-handles due to \cite{E1} (cf.\cite{G1}). Let $m_1,m_2,\dots,m_n$ be the diagonal components of $Q$. 

We here estimate the value, denoted by $G_i$, of the $Q$-genus of each $X_i$. Let $\mathbf{v}=\{v_1,v_2,\dots,v_n\}$ be an ordered basis of $H_2(X_i;\mathbb{Z})$ whose intersection matrix is the aforementioned matrix $Q$. Since $d_i$ is the divisibility of $c_1(X_i)$, one can check that the inequality 
$\left| \langle c_1(X_i), v_{j} \rangle \right|\geq d_{i}$ 
holds for at least one $j$ with $1\leq j \leq n$.  Applying the adjunction inequality to $X_i$ and this $v_j$, we see that
\begin{equation*}
d_i+m_j\leq 2g_{X_i}(v_j)-2.
\end{equation*}
Hence, for each $i$, we obtain the estimate
\begin{equation*}
d_i+\min\{m_j\mid 1\leq j \leq n\}\leq 2G_i-2.
\end{equation*}

Since $\lim _{i\to \infty}d_i=\infty$, this estimate implies $\lim _{i\to \infty}G_i=\infty$. 
Therefore, there exists a strictly increasing subsequence $\{G_{i_k}\}_{k\in \mathbb{N}}$ of $\{G_{i}\}_{i\in \mathbb{N}}$. 
Since the $Q$-genus $G_i$ is a diffeomorphism invariant of $X_i$, the members of the infinite subfamily $\{X_{i_k}\mid k\in \mathbb{N}\}$ are pairwise non-diffeomorphic. 
\end{proof}

\begin{remark}This proof gives a simple method for obtaining lower bounds of intersection genera of given compact Stein 4-manifolds, since one can calculate their first Chern classes from their Stein handlebody structures (\cite{G1}) and Lefschetz fibration structures (\cite{EtOz2}). Also, one can give upper bounds of intersection genera by using their smooth handlebody structures. Thus we can often use intersection genera to distinguish the smooth structures of two given Stein 4-manifolds. Indeed, this argument effectively works for many examples of exotic Stein 4-manifolds obtained in \cite{AY6} and \cite{Y6}. 
\end{remark}

We can now easily prove Theorem~\ref{intro:thm:main}. 
\newtheorem*{intro:thm:main:restate}{Theorem~\ref{intro:thm:main}}

\begin{intro:thm:main:restate}
Let $\{X_i\mid i\in \mathbb{N}\}$ be an infinite family of compact Stein 4-manifolds such that, for each $i$, the contact 5-manifold $(W_i,\xi_i)$ supported by the contact open book $(X_i, \textnormal{id})$ admits a subcritical Stein filling without 1-handles. If the members of $\{(W_i,\xi_i)\mid i\in \mathbb{N}\}$ are pairwise non-contactomorphic, then at least infinitely many members of $\{X_i\mid i\in \mathbb{N}\}$ are pairwise non-diffeomorphic. 
\end{intro:thm:main:restate}

\begin{proof}

It suffices to prove the case where all of $H^2(X_i; \mathbb{Z})$'s are pairwise isomorphic. We here observe that each $X_i$ is simply connected. 
Clearly $W_i$ is the boundary of $X_i\times D^2=(X_i\times D^1)\times D^1$. It thus follows that each $W_i$ is diffeomorphic to the double $(X_i\times D^1)\cup \overline{X_i\times D^1}$ of $X_i\times D^1$. 
Since any Stein 4-manifold admits a handle decomposition without 3- and 4-handles, one can see that $\pi_1(W_i)$ is isomorphic to $\pi_1(X_1\times D^1)\cong \pi_1(X_i)$. Since $(W_i,\xi_i)$ admits a simply connected subcritical Stein filling, Remark~\ref{rem:subcritical} shows that $(W_i,\xi_i)$ is supported by the contact open book $(Y_i,\textnormal{id})$ for some simply connected Stein handlebody $Y_i$. Note that $\pi_1(Y_i)$ is isomorphic to $\pi_1(W_i)\cong \pi_1(X_i)$. Therefore, $X_i$ is simply connected.

By Theorem~\ref{thm:DGV} and Lemma~\ref{lem:iso}, we see that, for any $i\neq j$, there exists no isomorphism between $H^2(X_i;\mathbb{Z})$ and $H^2(X_j;\mathbb{Z})$ that maps $c_1(X_i)$ to $c_1(X_j)$. Since $X_i$ is simply connected, the universal coefficient theorem tells that $H^2(X_i; \mathbb{Z})$ has no torsion. We thus easily see that the divisibilities of $c_1(X_i)$ and $c_1(X_j)$ are not equal to each other. Therefore Theorem~\ref{thm:Stein:exotic} tells that at least infinitely many of $X_i$'s are pairwise non-diffeomorphic. 
\end{proof}
\begin{remark}As seen from this proof, for a compact Stein 4-manifold $X$, if the contact 5-manifold $(W,\xi)$ supported by the contact open book $(X, \textnormal{id})$ admits a simply connected subcritical Stein filling, then $X$ is simply connected. Van Koert kindly pointed out the following generalization: for a contact 5-manifold $(W,\xi)$ supported by a contact open book $(X, \textnormal{id})$, any Stein filling $V$ of $(W,\xi)$ satisfies $\pi_1(V)\cong \pi_1(X)$. To see this, we note that $\pi_1(V)\cong \pi_1(\partial V)$ holds for any compact Stein manifold $V$ of real dimension at least 6 (Lemma~3.1 in \cite{OV}). Applying this lemma twice, we see 
\begin{equation*}
\pi_1(V)\cong \pi_1(W)\cong \pi_1(X\times D^2)\cong \pi_1(X), 
\end{equation*}
and thus the above generalization follows. 
\end{remark}

The obvious corollary below says that a given compact smooth 4-manifold with boundary induces at most finitely many contact 5-manifolds admitting subcritical Stein fillings without 1-handles by contact open books with the identity monodromy. 

\begin{corollary}For any simply connected compact oriented smooth 4-manifold $X$ with boundary, there are at most finitely many contact 5-manifolds, up to contactomorphism, admitting subcritical Stein fillings without 1-handles such that each of them is supported by a contact open book $(Y, \textnormal{id})$ for a compact Stein 4-manifold $Y$ diffeomorphic to $X$. 
\end{corollary}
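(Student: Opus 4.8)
The plan is to reduce the classification of these contact $5$-manifolds up to contactomorphism to their first Chern classes, and then to bound the latter using the adjunction inequality. Throughout, let $(W,\xi)$ denote a contact $5$-manifold as in the statement: it admits a subcritical Stein filling without $1$-handles and is supported by a contact open book $(Y,\mathrm{id})$ for some compact Stein $4$-manifold $Y$ diffeomorphic to $X$.

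First, I would translate the contactomorphism type into homological data. By Lemma~\ref{lem:iso} there is an isomorphism $H^2(Y;\mathbb{Z})\to H^2(W;\mathbb{Z})$ sending $c_1(Y)$ to $c_1(\xi)$, and a diffeomorphism $Y\to X$ furnishes an isomorphism $H^2(X;\mathbb{Z})\to H^2(Y;\mathbb{Z})$; composing these, each $(W,\xi)$ determines a class $c_1(Y)\in H^2(X;\mathbb{Z})$, well defined up to automorphism of $H^2(X;\mathbb{Z})$. Since every $(W,\xi)$ here admits a subcritical Stein filling without $1$-handles, Theorem~\ref{thm:DGV} applies and tells us that two such contact manifolds are contactomorphic as soon as the corresponding pairs $(H^2,c_1)$ are isomorphic. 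Hence the number of contactomorphism types is at most the number of distinct values of $c_1(Y)$ in the fixed group $H^2(X;\mathbb{Z})$, and the corollary reduces to showing that these values form a finite set as $Y$ ranges over all Stein structures diffeomorphic to $X$.

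Second, I would pin down $c_1(Y)$ by its pairings. Because any Stein $4$-manifold has a handle decomposition without $3$- and $4$-handles (\cite{E1}, cf.\ \cite{G1}), $H_2(Y;\mathbb{Z})$ is free; as $Y\cong X$, the group $H_2(X;\mathbb{Z})$ is free of rank $n=b_2(X)$, and since $X$ is simply connected the universal coefficient theorem makes $H^2(X;\mathbb{Z})$ torsion-free with perfect Kronecker pairing $H^2\times H_2\to\mathbb{Z}$. Fixing once and for all a basis $v_1,\dots,v_n$ of $H_2(X;\mathbb{Z})$, an element of $H^2(X;\mathbb{Z})$ is completely determined by its pairings with the $v_j$, so it suffices to bound $\left|\langle c_1(Y),v_j\rangle\right|$ independently of $Y$. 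Each $v_j$ is represented by some smoothly embedded surface, so its minimal genus $g_X(v_j)$ is a finite constant depending only on the smooth manifold $X$, equal to $g_Y(v_j)$ since $Y\cong X$. Applying the adjunction inequality to the Stein manifold $Y$ and the nonzero class $v_j$ yields $\left|\langle c_1(Y),v_j\rangle\right|\leq 2g_X(v_j)-2-v_j\cdot v_j=:C_j$, a bound independent of the Stein structure. Thus each pairing lies in the fixed finite set $\{-C_j,\dots,C_j\}$, so $c_1(Y)$ takes at most $\prod_{j=1}^{n}(2C_j+1)$ values in $H^2(X;\mathbb{Z})$; combined with the reduction of the first step, this gives finitely many contactomorphism types.

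The step I expect to require the most care is the second one: the conclusion that $c_1(Y)$ is genuinely controlled by the finitely many bounded pairings rests on $H_2(Y;\mathbb{Z})$ being free (from the absence of $3$- and $4$-handles) and on simple connectivity forcing $H^2(X;\mathbb{Z})$ to be torsion-free, so that the Kronecker pairing detects $c_1$ completely. By contrast the adjunction estimate itself, and the passage from finiteness of $c_1$ to finiteness of contactomorphism types via Theorem~\ref{thm:DGV} and Lemma~\ref{lem:iso}, are routine.
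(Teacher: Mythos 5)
Your proof is correct, but it takes a genuinely different route from the paper's. The paper offers no separate argument for this statement: it is presented as an immediate (``obvious'') corollary of Theorem~\ref{intro:thm:main} --- if there were infinitely many pairwise non-contactomorphic contact 5-manifolds of the stated kind, each supported by some $(Y_i,\textnormal{id})$ with $Y_i$ diffeomorphic to $X$, then Theorem~\ref{intro:thm:main} would force infinitely many of the $Y_i$'s to be pairwise non-diffeomorphic, contradicting that they are all diffeomorphic to the single manifold $X$. You instead bypass Theorem~\ref{intro:thm:main} (and hence the divisibility/intersection-genus machinery behind Theorem~\ref{thm:Stein:exotic}) entirely: Lemma~\ref{lem:iso} together with Theorem~\ref{thm:DGV} injects the set of contactomorphism classes into the set of values of first Chern classes of Stein structures, viewed in the fixed group $H^2(X;\mathbb{Z})$, and the adjunction inequality bounds each pairing $\langle c_1(Y),v_j\rangle$ against a fixed basis. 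This is in substance the paper's Proposition~\ref{prop:finiteness:exotic} --- which is proved only in the subsection \emph{after} the corollary, for arbitrary compact oriented 4-manifolds with boundary, using a handle chain complex to deal with torsion --- specialized to the simply connected case, where the universal coefficient theorem reduces everything to the Kronecker pairing with a basis of the free group $H_2(X;\mathbb{Z})$, combined with the same DGV reduction that the paper uses inside the proof of Theorem~\ref{intro:thm:main}. What each route buys: the paper's is a one-line consequence of what precedes it, while yours is self-contained and quantitative, yielding the explicit bound $\prod_{j=1}^{n}(2C_j+1)$ on the number of contactomorphism classes in terms of minimal-genus data of $X$. Two small points to tidy: if ``diffeomorphic'' is not assumed orientation-preserving, an orientation-reversing identification flips the sign of the self-intersection term, so one should take $C_j=2g_X(v_j)-2+\left|v_j\cdot v_j\right|$ to cover both cases; and if some $C_j<0$ the set of admissible values is empty (so $X$ admits no Stein structure at all), which only strengthens the conclusion.
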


It would be natural to ask whether the condition `admitting subcritical Stein fillings without 1-handles' can be removed from this corollary. 
If Question~\ref{question:Andrews} has an affirmative answer, then this question is also affirmative due to Lemma~\ref{lem:AC}. 

\subsection{Finiteness of the number of the first Chern classes}
Here we prove the following proposition. 

\begin{proposition}\label{prop:finiteness:exotic}For any compact connected oriented smooth 4-manifold $X$ with boundary, there are at most finitely many classes of $H^2(X;\mathbb{Z})$ that are the first Chern classes of Stein structures on $X$.
\end{proposition}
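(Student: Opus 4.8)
The plan is to bound the set of possible first Chern classes by combining the adjunction inequality with the fact that Stein structures put $c_1$ into a constrained subset of the integer lattice $H^2(X;\mathbb{Z})$. First I would fix a finite collection of second homology classes that generate $H_2(X;\mathbb{Z})/\mathrm{Tor}$ and are represented by embedded surfaces of some definite genera; concretely, since any Stein 4-manifold has a handle decomposition without 3- and 4-handles, the cores of the 2-handles (capped off) give embedded surfaces whose genera and self-intersections are determined by the smooth handlebody structure of $X$ alone, independent of which Stein structure we place on $X$. Evaluating $\langle c_1(Z),[\Sigma]\rangle$ against these fixed classes $[\Sigma]$ and applying the adjunction inequality yields, for each generator, an inequality of the form
\begin{equation*}
\left|\langle c_1(Z),[\Sigma_k]\rangle\right|\leq 2g_k-2-[\Sigma_k]\cdot[\Sigma_k],
\end{equation*}
whose right-hand side depends only on the smooth topology of $X$ and not on the Stein structure $Z$.

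The key observation is that the right-hand bounds are uniform over all Stein structures on $X$: the genera $g_k$ can be taken to be the minimal genera $g_X([\Sigma_k])$, and the self-intersections are fixed by the chosen basis. Thus each pairing $\langle c_1(Z),[\Sigma_k]\rangle$ is confined to a finite set of integers. Since the $[\Sigma_k]$ span $H_2(X;\mathbb{Z})/\mathrm{Tor}$, a class in $H^2(X;\mathbb{Z})/\mathrm{Tor}$ is completely determined by its pairings against these finitely many generators, so only finitely many torsion-free parts of $c_1(Z)$ can occur. The remaining step is to handle the torsion subgroup of $H^2(X;\mathbb{Z})$, which is automatically finite since $X$ is a compact manifold with finitely generated homology; combining the finitely many torsion-free candidates with the finite torsion subgroup gives finitely many total candidates for $c_1(Z)$.

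I would organize the proof as follows: (1) record that $X$ admits a handle decomposition without 3- and 4-handles, so $H_2(X;\mathbb{Z})$ is free and is generated by the capped 2-handle cores, each represented by an embedded surface of controlled genus; (2) apply the adjunction inequality to these fixed generating surfaces to bound $\left|\langle c_1(Z),[\Sigma_k]\rangle\right|$ by a constant independent of $Z$; (3) conclude that the image of $c_1(Z)$ in $H^2(X;\mathbb{Z})/\mathrm{Tor}$ lies in a finite set, and then add the finite torsion contribution.

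The main obstacle I anticipate is making precise the claim that a single family of embedded surfaces, with $Z$-independent genera, witnesses the adjunction bound for \emph{every} Stein structure simultaneously. The adjunction inequality as stated bounds genera in terms of $c_1(Z)$ for a fixed $Z$, so one must be careful to invoke it in the direction that bounds $c_1(Z)$ given surfaces whose genera are fixed by the smooth structure (taking $g=g_X([\Sigma_k])$, the smooth minimal genus, which is a diffeomorphism invariant and hence independent of the complex structure). Once this uniformity is nailed down — essentially the remark that the minimal genus function depends only on the underlying smooth 4-manifold — the finiteness is immediate, and this observation also explains why the present proposition yields an alternative proof of Theorem~\ref{thm:Stein:exotic}.
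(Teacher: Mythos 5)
Your proposal is correct and follows essentially the same route as the paper: fix a generating set of $H_2(X;\mathbb{Z})$ represented by surfaces whose genera are determined by the smooth structure alone, apply the adjunction inequality to bound the pairings $\langle c_1(Z),[\Sigma_k]\rangle$ uniformly over all Stein structures $Z$, and absorb the torsion of $H^2(X;\mathbb{Z})$ by its finiteness. The only cosmetic difference is bookkeeping: the paper carries this out with an explicit Smith-normal-form basis of the handle chain complex (bounding torsion coefficients by the orders $|a_i|$), whereas you invoke the universal coefficient theorem and the finiteness of the torsion subgroup abstractly, and your uniformity concern is resolved exactly as the paper does it, by fixing the representative surfaces once and for all.
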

\begin{proof}
If $b_2(X)=0$, then $H^2(X;\mathbb{Z})$ is a finite set, implying the proposition. We thus assume $b_2(X)>0$. Assume further that $X$ admits a Stein structure. By \cite{E1} (cf.\ \cite{G1}, \cite{GS}), $X$ admits a Stein handlebody decomposition. Note that this handlebody has neither 3- nor 4-handles. 

We consider the CW complex given by the handlebody and discuss its cohomology and homology groups. 
Let $C_i(X)$ denote the $i$-th chain group, and let $\partial_i: C_i(X)\to C_{i-1}(X)$ be the boundary homomorphism. By changing bases, we have a basis $v_1,v_2,\dots, v_n$ of $C_2(X)$, a basis $u_1,u_2,\dots, u_l$ of $C_1(X)$, and non-zero integers $a_{m+1}, \dots, a_n$ such that 
\begin{equation*}
\partial_2(v_i)=\left\{
\begin{array}{ll}
0, &\text{if $1\leq i\leq m$;}\\
a_iu_i, &\text{if $m+1\leq i\leq n$.}
\end{array}
\right.
\end{equation*}
We thus see that $[v_1], [v_2],\dots, [v_m]$ is a basis of $H_2(X;\mathbb{Z})$. We note that the assumption $b_2(X)\geq 1$ guarantees $m\geq 1$. Here let $C^i(X)$ denote the $i$-th cochain group, and let $v_i^*: C_2(X)\to \mathbb{Z}$ be the dual of $v_i$ for each $i$. Clearly $\{[v_1^*], [v_2^*], \dots, [v_n^*]\}$ is a generating set of $H^2(X;\mathbb{Z})$. Note that, for $m+1\leq i\leq n$, the class $[v_{i}^*]$ is a torsion element of order $|a_{i}|$ . 

Now let $c$ be the first Chern class of a Stein structure on $X$. Then there exist integers $c_1, c_2,\dots, c_n$ such that $c=c_1[v_1^*]+c_2[v_2^*]+\dots+c_n[v_n^*]$. We may assume that $|c_i|\leq a_i$ for $m+1\leq i\leq n$, since $[v_{i}^*]$ is of order $|a_i|$. Let $g_i$ be the genus of a fixed smoothly embedded surface in $X$ representing the second homology class $[v_i]$. By applying the adjunction inequality to the first Chern class $c$ and the second homology class $[v_i]$, we obtain $|c_i|+[v_i]\cdot [v_i]\leq 2g_i-2$ for $1\leq i\leq m$. Therefore, only finitely many integers can be the value of $c_i$ for each $1\leq i\leq n$. This shows that there are at most finitely many classes of $H^2(X;\mathbb{Z})$ that can be the first Chern classes of Stein structures on $X$. 
\end{proof}

This proposition gives an alternative proof of Theorem~\ref{thm:Stein:exotic} by the pigeonhole principle. However, the previous proof is much more practical, since this alternative proof does not tell how to distinguish smooth structures of given two Stein 4-manifolds in the infinite family. 
\begin{remark}For a compact oriented smooth 4-manifold $X$ with boundary, let $N_C(X)$ denote the number of the first Chern classes of Stein structures on $X$. This is clearly a diffeomorphism invariant of $X$. The proof of Proposition~\ref{prop:finiteness:exotic} tells how to give an upper bound of $N_C(X)$, and  one can give a lower bound of $N_C(X)$  by finding Stein structures on $X$ (e.g.\ Stein handlebodies and Lefschetz fibrations). For example, using the algorithm obtained in \cite{Y6}, we can construct infinite families of pairwise exotic 4-manifolds which can be distinguished by $N_C$ (e.g.\ exotic 4-manifolds used in Theorem~\ref{thm:exotic} of this paper). However, compared with the intersection genus, we need more efforts for obtaining upper and lower bounds of $N_C$, and thus the intersection genus is more practical and useful for coarsely distinguishing smooth structures. 
\end{remark}
\section{Examples}
In this section, we construct two examples motivated by Theorem~\ref{intro:thm:main}. The first example below tells that `at least infinitely many members' in the conclusion of Theorem~\ref{intro:thm:main} cannot be strengthened to `all members' without additional constraints. 

\begin{proposition}\label{prop:strengthen}Let $n\geq 2$ be a positive integer, and let $(W_1,\xi_1)$, $(W_2,\xi_2)$, \dots, $(W_n,\xi_n)$ 
be pairwise non-contactomorphic closed contact 5-manifolds admitting subcritical Stein fillings without 1-handles. If they are pairwise diffeomorphic smooth 5-manifolds, then there exist 
pairwise diffeomorphic compact Stein 4-manifolds $Z_1, Z_2, \dots, Z_n$ such that each contact open book $(Z_i, \textnormal{id})$ supports $(W_i,\xi_i)$. 
\end{proposition}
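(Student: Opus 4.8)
The plan is to reduce the statement to a concrete handlebody construction via the classification recalled in Section~3. First I would normalize the given contact manifolds: since each $(W_i,\xi_i)$ admits a subcritical Stein filling without $1$-handles, Remark~\ref{rem:classification}$(2)$ shows it is contactomorphic to some $(S_{m_i,r_i},\zeta_{m_i,r_i})$, where $m_i=b_2$ of a page and $r_i\ge 0$ is a rotation divisor. I would then read off the combinatorial data forced by the two hypotheses. By Remark~\ref{rem:classification}$(1)$ the underlying smooth $5$-manifold $S_{m_i,r_i}$ is $\#_{m_i}S^2\times S^3$ or $\#_{m_i}S^2\tilde{\times}S^3$ according to the parity of $r_i$, and for distinct pairs (number of summands, parity) these are pairwise non-diffeomorphic. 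Hence the hypothesis that the $W_i$ are pairwise diffeomorphic forces a common value $m_i=m$ and a common parity of the $r_i$, while the hypothesis that the $(W_i,\xi_i)$ are pairwise non-contactomorphic forces, again by Remark~\ref{rem:classification}$(1)$, the integers $r_1,\dots,r_n$ to be pairwise distinct. After relabelling I may assume $0\le r_1<r_2<\cdots<r_n$.

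The core of the argument is to realize, for each $i$, a Stein handlebody $Z_i$ without $1$-handles with $b_2(Z_i)=m$ and $r(Z_i)=r_i$ so that the underlying smooth $4$-manifolds are all diffeomorphic; Definition~\ref{def:classification} and Proposition~\ref{prop:rotation} will then give that $(Z_i,\mathrm{id})$ supports $(S_{m,r_i},\zeta_{m,r_i})$, which is contactomorphic to $(W_i,\xi_i)$ by the first step, completing the proof. I would build each $Z_i$ by attaching $m$ $2$-handles to $D^4$ along a Legendrian unlink in the standard tight $(S^3,\xi_{\mathrm{st}})$, with each framing one less than the contact framing, keeping the smooth unlink type and all smooth framings independent of $i$ and varying only the rotation numbers of the Legendrian representatives. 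Concretely, fix a Thurston--Bennequin number $t$ with $-t-1\ge r_n$ and $-t-1\equiv r_1\pmod 2$. The classification of Legendrian unknots realizes the unknot with Thurston--Bennequin number $t$ and any rotation number $r$ satisfying $|r|\le -t-1$ and $r\equiv -t-1\pmod 2$; so I can take the first component to be such an unknot with rotation number $r_i$ and the remaining $m-1$ components to be standard unknots with rotation number $0$. Since the smooth framing $t-1$ and the smooth unlink type do not depend on $i$, the manifolds $Z_i$ are determined by one and the same smooth framed link and hence are pairwise diffeomorphic. By Gompf's Chern class formula, equivalently by the definition of the rotation divisor, $r(Z_i)=\gcd(r_i,0,\dots,0)=r_i$, as needed.

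I expect the one genuine difficulty to lie in this construction, namely in simultaneously controlling two quantities that pull against each other: the rotation divisor $r(Z_i)$ must vary with $i$, whereas the diffeomorphism type must not. The obstruction is that changing the rotation number of a Legendrian knot naively changes its Thurston--Bennequin number, and therefore the smooth attaching framing $tb-1$, which would change the smooth $4$-manifold. The point that dissolves this tension is that at a single, sufficiently negative Thurston--Bennequin number the unknot already realizes an entire interval of rotation numbers of one fixed parity, so one can vary the rotation number while fixing both the smooth knot type and the framing. Verifying that the prescribed parity and range of rotation numbers are compatible with the values $r_1,\dots,r_n$ is precisely where the combined use of the pairwise-diffeomorphic and pairwise-non-contactomorphic hypotheses, isolated in the first step, is needed.
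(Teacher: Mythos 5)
Your proof is correct, and it follows essentially the same two-step strategy as the paper: first reduce, via Remark~\ref{rem:classification}, to the models $(S_{k,r_i},\zeta_{k,r_i})$ with a common $k$ and rotation numbers $r_i$ of a common parity, and then realize all of the $r_i$ by Stein handlebodies without 1-handles built on one and the same smooth framed link, so that Proposition~\ref{prop:rotation} and Definition~\ref{def:classification} finish the argument. The only point where you diverge is the concrete Legendrian realization: the paper fixes the Thurston--Bennequin number at $r_1+1$ (which is positive, hence unattainable by unknots) and takes Legendrian representatives of a $(p,2)$-torus knot with $p$ large, whereas you fix a sufficiently negative Thurston--Bennequin number $t$ and realize each $r_i$ by stabilizations of the standard Legendrian unknot, padding with $m-1$ standard unknotted components in place of the paper's summand $Y_{k-1}$ with $r(Y_{k-1})=0$. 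Your variant is, if anything, more self-contained: the fact that the unknot at fixed $tb=t$ realizes exactly the rotation numbers $r$ with $|r|\le -t-1$ and $r\equiv t+1 \pmod 2$ follows from elementary stabilization counting, with no need for the classification of Legendrian torus knots; both versions rest on the identical key observation you isolate at the end, namely that a whole parity interval of rotation numbers is realized at a single fixed $tb$ within a fixed smooth knot type, keeping the smooth framing $tb-1$ constant.
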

\begin{proof}By Remark~\ref{rem:classification}, we see that there exist a positive integer $k$ and non-negative integers $r_1,r_2,\dots,r_n$ such that each $(W_i,\xi_i)$ is contactomorphic to $(S_{k,r_i}, \zeta_{k,r_i})$. Note that all of the $r_i$'s have the same parity, since $W_i$'s are pairwise diffeomorphic. Let us fix a smooth knot $K$ in $S^3$ satisfying the following condition: 
for each $1\leq i\leq n$, the knot $K$ has a Legendrian representative $\mathcal{L}_i$ whose rotation number is $r_i$ and whose Thurston-Bennequin number is a fixed number $r_1+1$ (For example, the $(p,2)$-torus knot with a sufficiently large $p$ is such an example of $K$.). Using this Legendrian representative, we define a Stein 4-manifold $X_i$ as the Stein handlebody obtained from $D^4$ by attaching a 2-handle along $\mathcal{L}_i$. Note that the framing $r_1$ is independent of $i$. We also use a Stein handlebody $Y_{k-1}$ without 1-handles satisfying $b_2(Y_{k-1})=k-1$ and $r(Y_{k-1})=0$. One can easily find such a $Y_{k-1}$. Now let $Z_i$ be the boundary connected sum $X_i\natural Y_{k-1}$, which is a Stein handlebody. Note that $Z_i$'s are pairwise diffeomorphic. 
Since $r(Z_i)=r_i$ and $b_2(Z_i)=k$, the contact 5-manifold $(S_{k,r_i}, \zeta_{k,r_i})$ is supported by the contact open book $(Z_i, \textnormal{id})$. Therefore the claim follows. 
\end{proof}

The next example below tells that there are many examples of infinitely many pairwise exotic Stein 4-manifolds whose smooth structures are distinguished by Theorem~\ref{intro:thm:main}. 
Let us recall Remark~\ref{rem:classification}. Any closed contact 5-manifold with $b_2=n$ admitting a subcritical Stein filling without 1-handles is diffeomorphic to either $\#_nS^2\times S^3$ or $\#_nS^2\widetilde{\times} S^3$.  Furthermore, the set of contactomorphism classes of such contact structures on $\#_nS^2\times S^3$ (resp.\ $\#_nS^2\widetilde{\times} S^3$) is given by $\{\zeta_{2r}\mid r\in \mathbb{Z}_{\geq 0}\}$ (resp.\ $\{\zeta_{2r+1}\mid r\in \mathbb{Z}_{\geq 0}\}$), where $\mathbb{Z}_{\geq 0}$ denotes the set of non-negative integers. 

\begin{theorem}[cf.\ \cite{AY7}]\label{thm:exotic}For each fixed integer $n\geq 2$, there exists an infinite family $\{Z_{n,2r}\mid r\in \mathbb{Z}_{\geq 0}\}$ of pairwise homeomorphic compact Stein 4-manifolds such that each contact open book $(Z_{n,2r}, \textnormal{id})$ supports $(\#_nS^2\times S^3, \zeta_{2r})$. $($Hence, by Theorem~\ref{intro:thm:main}, at least infinitely members are pairwise homeomorphic but non-diffeomorphic.$)$ Furthermore, the same claim also holds for $\{(\#_nS^2\widetilde{\times} S^3, \zeta_{2r+1})\mid r\in \mathbb{Z}_{\geq 0}\}$. 
\end{theorem}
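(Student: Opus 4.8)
The goal is to exhibit, for each fixed $n\geq 2$, an infinite family $\{Z_{n,2r}\}_{r\in\mathbb{Z}_{\geq 0}}$ of pairwise homeomorphic compact Stein 4-manifolds so that $(Z_{n,2r},\textnormal{id})$ supports the contact 5-manifold $(\#_nS^2\times S^3,\zeta_{2r})$, and similarly for the odd case. The plan is to produce the pages directly as explicit Stein handlebodies without 1-handles and to control two things simultaneously: the homeomorphism type of the page (to get the ``pairwise homeomorphic'' conclusion) and the rotation divisor $r(Z_{n,2r})$ (to control the supported contact structure via Proposition~\ref{prop:rotation} and Definition~\ref{def:classification}). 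Since the invariant $r(X)$ is the greatest common divisor of the rotation numbers of the Legendrian attaching circles, I would arrange one distinguished 2-handle whose rotation number realizes the target divisibility $2r$ (resp.\ $2r+1$), while the remaining handles are attached with rotation number $0$ so as not to disturb the gcd.

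First I would invoke the algorithm of Yasui~\cite{Y6}, which the paper repeatedly cites as producing infinite families of pairwise exotic (hence pairwise homeomorphic) Stein 4-manifolds without 1-handles with prescribed intersection form and first Chern class data. Concretely, for fixed $n$ and $b_2=n$, the construction should yield Stein handlebodies that are pairwise homeomorphic but whose first Chern classes have divisibilities tending to infinity; by Gompf's first Chern class formula~\cite{G1}, the divisibility of $c_1$ is governed by the rotation numbers of the attaching circles, so realizing prescribed rotation data simultaneously controls both $c_1$ and the rotation divisor. The key step is to check that one can choose the Legendrian attaching data so that $r(Z_{n,2r})=2r$ exactly: I would take a model in which a single 2-handle has a Legendrian attaching knot whose rotation number equals the target (achievable by stabilizations and the use of torus-knot Legendrian representatives, exactly as in the proof of Proposition~\ref{prop:strengthen}), and all other 2-handles carry rotation number $0$, forcing the gcd to be the intended value.

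Once the pages $Z_{n,2r}$ are in hand with $b_2=n$, $r(Z_{n,2r})=2r$, and no 1-handles, Proposition~\ref{prop:rotation} (together with Definition~\ref{def:classification} and Remark~\ref{rem:classification}) immediately gives that $(Z_{n,2r},\textnormal{id})$ supports $(S_{n,2r},\zeta_{n,2r})=(\#_nS^2\times S^3,\zeta_{2r})$, since $2r$ is even; the odd case $(\#_nS^2\widetilde{\times}S^3,\zeta_{2r+1})$ is handled identically by arranging $r(Z_{n,2r+1})=2r+1$. The parenthetical conclusion then follows by Theorem~\ref{intro:thm:main}: the contact 5-manifolds $(\#_nS^2\times S^3,\zeta_{2r})$ are pairwise non-contactomorphic for distinct $r$ by Remark~\ref{rem:classification}(1), so at least infinitely many of the pages $Z_{n,2r}$ are pairwise non-diffeomorphic; being pairwise homeomorphic by construction, they are pairwise exotic.

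The main obstacle I expect is the simultaneous control of the \emph{homeomorphism type} and the \emph{rotation divisor} across the whole family. Making rotation numbers grow (to force $r(Z_{n,2r})$ to run through all even values) tends to change $c_1$ and hence, a priori, the homeomorphism type via the intersection form and the $w_2$/spin data; the delicate point is to keep all members within a single homeomorphism class while letting $r$ vary. This is precisely where the flexibility of the algorithm in~\cite{Y6} is needed: one must verify that the construction admits a degree of freedom allowing the rotation data (and thus $c_1$ up to the relevant equivalence) to vary without altering the oriented homeomorphism type, e.g.\ by keeping the intersection form fixed and the manifolds simply connected so that Freedman's theorem applies. Confirming that such a family genuinely exists for every fixed $n\geq 2$, rather than only for special $n$, is the crux; the remaining steps are bookkeeping with Proposition~\ref{prop:rotation} and Theorem~\ref{intro:thm:main}.
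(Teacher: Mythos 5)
Your overall skeleton matches the paper's: build Stein handlebodies without 1-handles with $b_2=n$ and rotation divisor $2r$ (resp.\ $2r+1$), invoke Proposition~\ref{prop:rotation} and Definition~\ref{def:classification} to identify the supported contact 5-manifold as $(S_{n,2r},\zeta_{n,2r})$, and get the exoticness statement from Theorem~\ref{intro:thm:main}. But the heart of the theorem is the existence of such a family whose members are moreover \emph{pairwise homeomorphic}, and on this point your proposal has a genuine gap --- one you acknowledge yourself (``the crux'') but do not close. Worse, the specific mechanism you suggest, a single distinguished 2-handle attached along Legendrian representatives of a fixed torus knot ``exactly as in the proof of Proposition~\ref{prop:strengthen}'', cannot work for an infinite family: by the slice-Bennequin inequality, every Legendrian representative $\mathcal{L}$ of a fixed smooth knot $K$ satisfies $\textnormal{tb}(\mathcal{L})+|\textnormal{rot}(\mathcal{L})|\leq 2g_s(K)-1$, where $g_s(K)$ is the smooth slice genus; so with the Thurston-Bennequin number (equivalently the handle framing, hence the intersection form) held fixed, only finitely many rotation numbers are realizable. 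This is precisely why the paper uses that trick only for the \emph{finite} families of Proposition~\ref{prop:strengthen}. For an infinite family the smooth attaching data must change as $r$ grows --- indeed Theorem~\ref{intro:thm:main} itself forces the pages to change diffeomorphism type infinitely often --- while the homeomorphism type must stay fixed; deferring this tension to an unspecified ``degree of freedom'' in the algorithm of \cite{Y6} is a citation of a black box, not an argument.

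The paper closes this gap with an explicit construction: 4-manifolds $X_p$ (from \cite{AY6}, \cite{Y6}, \cite{AY7}) obtained from a fixed $X_0$ by multiplicity-one log transforms, so that all $X_p$ share the same boundary, are simply connected, and have even, indefinite, unimodular intersection forms. Each $X_p$ admits a Stein handlebody decomposition without 1-handles whose two 2-handles have rotation numbers $0$ and $p$; setting $Z_{n,p}=X_p\natural Y_{n-2}$ with $b_2(Y_{n-2})=n-2$ and $r(Y_{n-2})=0$ gives $r(Z_{n,p})=p$ and $b_2(Z_{n,p})=n$, whence $(Z_{n,p},\textnormal{id})$ supports $(S_{n,p},\zeta_{n,p})$. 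Pairwise homeomorphism of the $Z_{n,2r}$ then follows from the classification of indefinite unimodular forms together with Freedman's theorem (cf.\ Boyer) applied to $X_{2r}$ and $X_{2r'}$, since a homeomorphism extending a boundary diffeomorphism lets one swap the $X$-pieces inside the $Z$'s. The log transform is thus the essential ingredient that lets the rotation data (hence $c_1$ and the supported contact structure) run to infinity while the homeomorphism type of the page stays constant; without some such input your construction step does not go through, and the rest of your argument, which is correct bookkeeping, has nothing to apply to.
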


\begin{proof}The following 4-manifolds and Stein handlebodies were obtained in \cite{AY6}, \cite{Y6} and \cite{AY7}, and this proof is based on the proof of Theorem~1.3 in \cite{AY7}. 

For a non-negative integer $p$, let $X_p$ be the smooth 4-manifold given by the handlebody diagram in Figure~\ref{fig:smooth_nuclei}, where the box $p$ denotes $p$ right-handed full twists. We note that each $X_p$ is obtained from $X_0$ by a log transform with multiplicity one. By canceling the 1-handles, we have the diagrams of $X_p$ in Figure~\ref{fig:2-handle_nuclei}. By isotopy, we obtain the Stein handlebody decomposition of $X_p$ in Figure~\ref{fig:many_contact}. One can check that the rotation numbers of the attaching circles (Legendrian knots) of two 2-handles of $X_p$ are $0$ and $p$ for each $p\geq 0$. 

Now let $Y_{n-2}$ be a 4-dimensional Stein handlebody without 1-handles satisfying $b_2(Y_{n-2})=n-2$ and $r(Y_{n-2})=0$, and let $Z_{n,p}$ be the boundary connected sum $X_p\natural Y_{n-2}$, which is a Stein handlebody. 
 Since $r(Z_{n,p})=p$ and $b_2(Z_{n,p})=n$, we see that the contact open book $(Z_{n,p}, \textnormal{id})$ supports the contact 5-manifold $(S_{n,p}, \zeta_{n,p})$. 

Lastly we show that, for any non-negative integers $r$ and $r'$, 4-manifolds $Z_{n,2r}$ and $Z_{n,2r'}$ are homeomorphic to each other. We can easily check that the intersection forms of $X_{n,2r}$ and $X_{n,2r'}$ are even, unimodular and indefinite. By the classification of indefinite unimodular intersection forms (cf.\ \cite{GS}), we see that the intersection forms of $X_{n,2r}$ and $X_{n,2r'}$ are isomorphic. Since they are simply connected, and their boundaries are homeomorphic to each other, Freedman's theorem~\cite{Fr} (cf.\ \cite{B}) tells that any diffeomorphism  $\partial X_{n,2r}\to \partial X_{n,2r'}$ extends to a homeomorphism $X_{n,2r}\to X_{n,2r'}$. Since $Z_{n,2r'}$ is obtained from $Z_{n,2r}$ by removing $X_{n,2r}$ and gluing $X_{n,2r'}$, it follows that $Z_{n,2r}$ and $Z_{n,2r'}$ are homeomorphic to each other. Therefore the theorem holds for the family $\{(\#_nS^2\times S^3, \zeta_{2r})\mid r\in \mathbb{Z}_{\geq 0}\}$. By the same argument, we see that the theorem also holds for $\{(\#_nS^2\widetilde{\times} S^3, \zeta_{2r+1})\mid r\in \mathbb{Z}_{\geq 0}\}$. 
\end{proof}

\begin{figure}[h!]
\begin{center}
\includegraphics[width=1.3in]{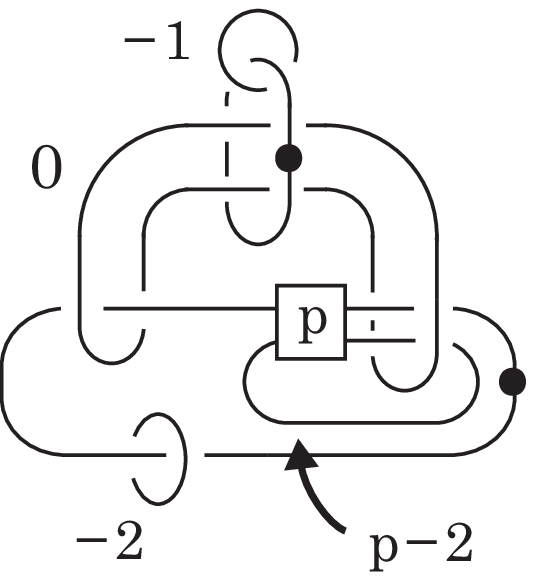}
\caption{$X_p$}
\label{fig:smooth_nuclei}
\end{center}
\end{figure}

\begin{figure}[h!]
\begin{center}
\includegraphics[width=3.1in]{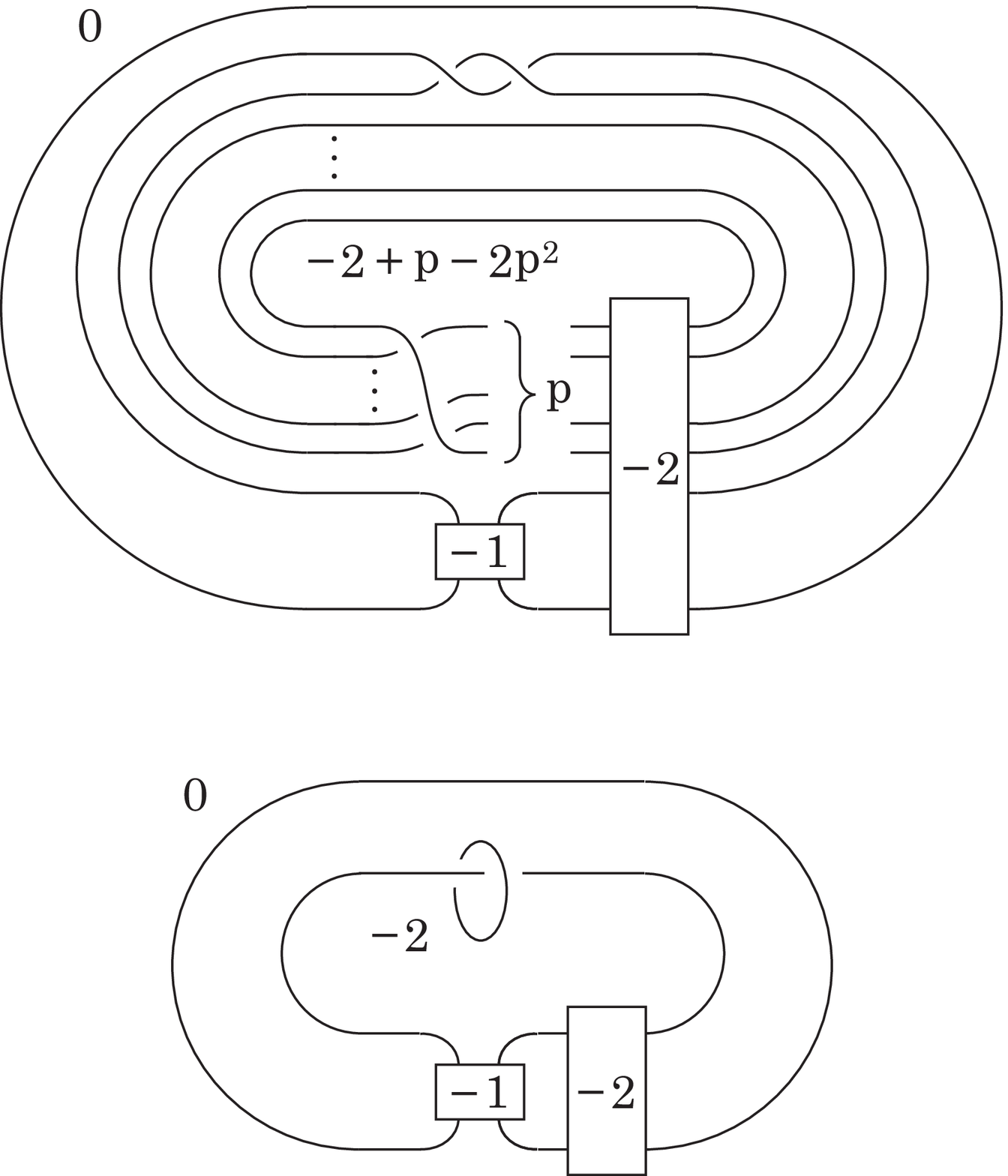}
\caption{The upper handlebody is $X_p$ for $p\geq 1$, and the lower one is $X_0$.}
\label{fig:2-handle_nuclei}
\end{center}
\end{figure}

\begin{figure}[h!]
\begin{center}
\includegraphics[width=2.7in]{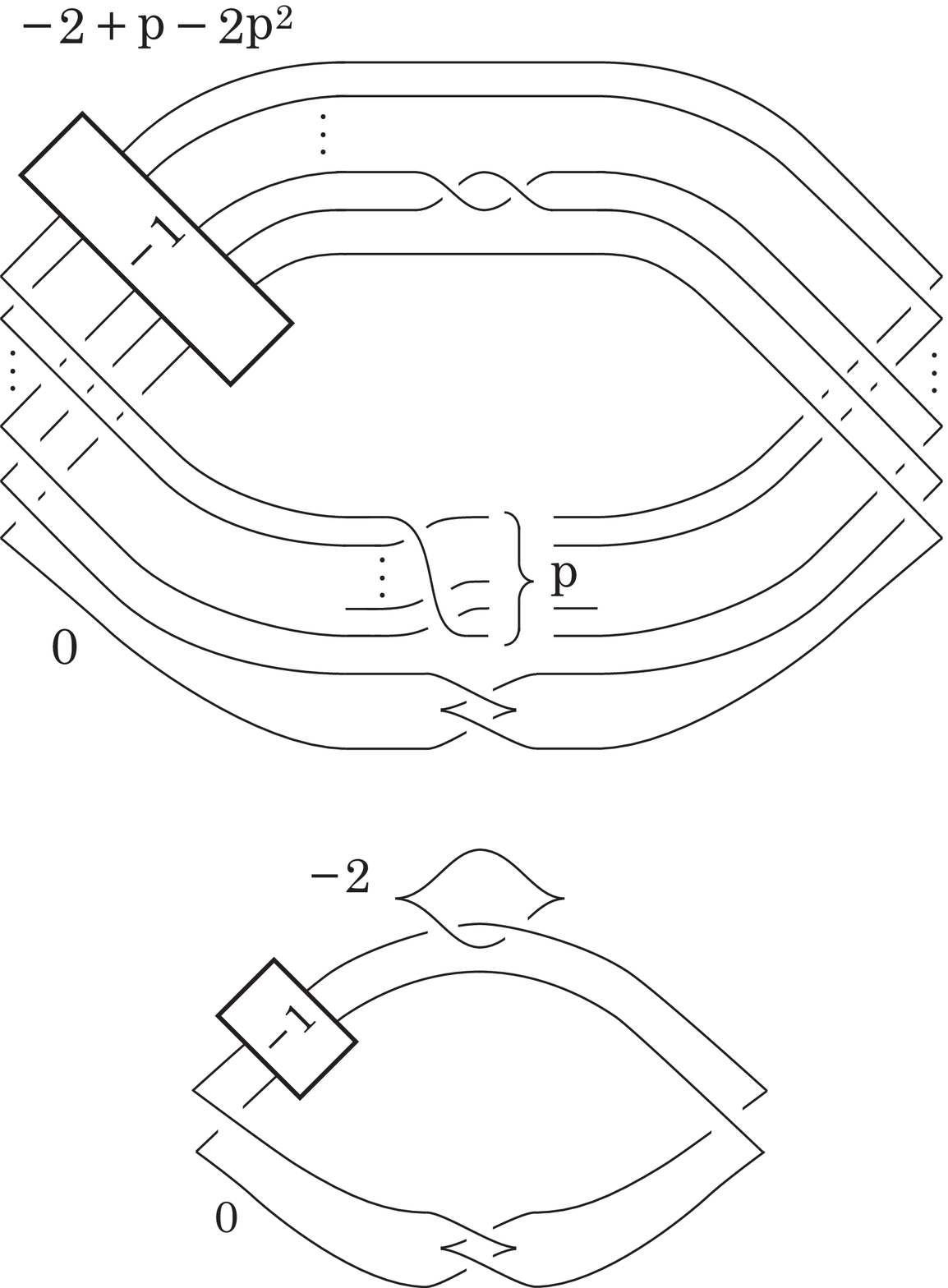}
\caption{The upper Stein handlebody is $X_p$ for $p\geq 1$, and the lower one is $X_0$.}
\label{fig:many_contact}
\end{center}
\end{figure}

\begin{figure}[h!]
\begin{center}
\includegraphics[width=2.1in]{}
\caption{Legendrian version of left-handed full twists}
\label{fig:Legendrian_left_twists}
\end{center}
\end{figure}

\subsection*{Acknowledgements}The author would like to thank Otto van Koert for his helpful comments. 


\begin{thebibliography}{35}



\bibitem{AM}S.\ Akbulut and R.\ Matveyev, \textit{Exotic structures and adjunction inequality}, Turkish J.\ Math.\ \textbf{21} (1997) 47--53.

\bibitem{AY5}S.\ Akbulut and K.\ Yasui, \textit{Cork twisting exotic Stein 4-manifolds}, J.\ Differential Geom.\ \textbf{93} (2013), no. 1, 1--36. 
\bibitem{AY6}S.\ Akbulut and K.\ Yasui, \textit{Infinitely many small exotic Stein fillings}, arXiv:1208.1053, to appear in Journal of Symplectic Geometry. 
\bibitem{AY7}S.\ Akbulut and K.\ Yasui, \textit{Contact 5-manifolds admitting open books with exotic pages},  arXiv:1502.06118, to appear in Math.\ Res.\ Lett.
\bibitem{AC}J.\,J.\ Andrews and M.\,L.\ Curtis, \textit{Free groups and handlebodies}, Proc.\ Amer.\ Math.\ Soc.\ \textbf{16} (1965), 192--195. 
\bibitem{C} K.\ Cieliebak, \textit{Subcritical Stein manifolds are split}, arXiv:math/0204351v1.

\bibitem{B}S.\ Boyer, \textit{Simply-connected $4$-manifolds with a given boundary}, Trans. Amer. Math. Soc. \textbf{298} (1986), no.\ 1, 331--357.
\bibitem{CE}K.\ Cieliebak and Y.\ Eliashberg, \textit{From Stein to Weinstein and back. Symplectic geometry of affine complex manifolds}, American Mathematical Society Colloquium Publications \textbf{59}, American Mathematical Society, Providence, RI, 2012. xii+364 pp.

\bibitem{DGV}F.\ Ding, H.\ Geiges and O.\ van Koert, \textit{Diagrams for contact 5-manifolds}, J.\ Lond.\ Math.\ Soc.\ \textbf{86} (2012),  no.\ 3, 657--682.
\bibitem{E1}Y.\ Eliashberg. \textit {Topological characterization of Stein manifolds of dimension $>2$}, International J. of Math. Vol. 1 (1990), No 1  pp. 29-46.

\bibitem{EtOz2}J.\ Etnyre and B.\ Ozbagci, \textit{Invariants of contact structures from open books}, Trans.\ Amer.\ Math.\ Soc.\ \textbf{360} (2008),  no.\ 6, 3133--3151.
\bibitem{FS3}R.\ Fintushel and R.\ J.\ Stern, \textit{Immersed spheres in 4-manifolds and the immersed Thom conjecture}, Turkish J.\ Math.\ \textbf{19} (1995), 145--157.
\bibitem{Fr}M.\ Freedman, \textit{The topology of four-dimensional manifolds}, J.\ Differential Geom.\ \textbf{17} (1982), no.\ 3, 357--453.
\bibitem{Gi} E.\ Giroux, \textit{G\'{e}om\'{e}trie de contact: de la dimension trois vers les dimensions sup\'{e}rieures} (French) [Contact geometry: from dimension three to higher dimensions], Proceedings of the International Congress of Mathematicians, Vol.\ II (Beijing, 2002),  405--414, Higher Ed.\ Press, Beijing, 2002.
\bibitem{G1}R.\,E.\ Gompf, \textit{Handlebody construction of Stein surfaces}, Ann. of Math.\ (2) \textbf{148} (1998), no. 2, 619--693.
\bibitem{G_GT}R.\,E.\ Gompf, \textit{Minimal genera of open 4-manifolds}, arXiv:1309.0466, to appear in Geom.\ Topol.
\bibitem{GS}R.\,E.\ Gompf and A.\,I.\ Stipsicz, \textit{$4$-manifolds and Kirby calculus}, Graduate Studies in Mathematics, \textbf{20}. American Mathematical Society, 1999.
\bibitem{KM1}P.\ Kronheimer and T.\ Mrowka, \textit{The genus of embedded surfaces in the projective plane}, Math.\ Res.\ Lett.\ \textbf{1} (1994), no.\ 6. 797--808.
\bibitem{LM1}P.\ Lisca and G.\ Mati\'{c}, \textit{Tight contact structures and Seiberg-Witten invariants}, Invent.\ Math.\ \textbf{129} (1997) 509--525.
\bibitem{LM2}P.\ Lisca and G.\ Mati\'{c}, \textit{Stein 4-manifolds with boundary and contact structures}, Topology and its Applications \textbf{88} (1998) 55--66.
\bibitem{MST}J.\ Morgan, Z.\ Szab\'{o} and C.\ Taubes, \textit{A product formula for the Seiberg-Witten invariants and the generalized Thom conjecture}, J.\ Differential Geom.\ \textbf{44} (1996), 706--788.
\bibitem{OS1}B.\ Ozbagci and A I.\ Stipsicz, \textit{Surgery on contact 3-manifolds and Stein surfaces}, Bolyai Society Mathematical Studies, 13. Springer-Verlag, Berlin; Janos Bolyai Mathematical Society, Budapest, 2004.
\bibitem{OV}B.\ Ozbagci and O.\ van Koert, \textit{Contact open books with exotic pages}, Arch.\ Math.\ (Basel) \textbf{104} (2015),  no.\ 6, 551--560.
\bibitem{OzSz_ad}P.\ Ozsv\'{a}th and Z.\ Szab\'{o}, \textit{The symplectic Thom conjecture}, Ann.\ of Math.\ \textbf{151} (2000), 93--124. 

 
\bibitem{vK}O.\ van Koert, \textit{Open books on contact five-manifolds}, Ann.\ Inst.\ Fourier (Grenoble) \textbf{58} (2008),  no.\ 1, 139--157.
\bibitem{Y5}K.\ Yasui, \textit{Nuclei and exotic 4-manifolds}, arXiv:1111.0620v2.
\bibitem{Y6}K.\ Yasui, \textit{Partial twists and exotic Stein fillings}, arXiv:1406.0050. 
\end{thebibliography}
\end{document}